\newtheorem{sats}{Theorem}[section]
\newtheorem{deef}[sats]{Definition}
\newtheorem{lem}[sats]{Lemma}
\newtheorem{prop}[sats]{Proposition}
\newcommand{\Z}{\mathbbm{Z}}
\newcommand{\N}{\mathbbm{N}}
\newcommand{\ellL}{\mathcal{L}}
\newcommand{\rd}{\mathrm{d}}
\newcommand{\I}{\mathfrak{I}}
\newcommand{\J}{\mathcal{J}}
\newcommand{\Ko}{\mathcal{K}}
\newcommand{\Bo}{\mathcal{B}}
\newcommand{\Co}{\mathcal{C}}
\newcommand{\T}{\mathbbm{T}}
\newcommand{\Eg}{\mathcal{E}}
\renewcommand{\epsilon}{\varepsilon}
\renewcommand{\phi}{\varphi}
\title{
Equivariant extensions of $*$-algebras}
\author{Magnus Goffeng }
\date{Department of Mathematical Sciences, Division of Mathematics\\
Chalmers university of Technology and University of Gothenburg}
\begin{document}
\maketitle

\begin{abstract}
A bivariant functor is defined on a category of $*$-algebras and a category of operator ideals, both with actions of a second countable group $G$, into the category of abelian monoids. The element of the bivariant functor will be $G$-equivariant extensions of a $*$-algebra by an operator ideal under a suitable equivalence relation. The functor is related with the ordinary $Ext$-functor for $C^*$-algebras defined by Brown-Douglas-Fillmore. Invertibility in this monoid is studied and characterized in terms of Toeplitz operators with abstract symbol. 
\end{abstract}

\section*{Introduction}

Extensions of $C^*$-algebras by stable $C^*$-algebras have been thoroughly studied (see \cite{brbl}, \cite{bdf}, \cite{jeto}, \cite{thom}) due to their close relation to Toeplitz operators and $KK$-theory (see \cite{jeto}, \cite{thom}). The starting point was the article \cite{bdf} where an abelian monoid $Ext(A)$ was associated to a $C^*$-algebra $A$. This monoid consists of extensions $0\to \Ko\to E\to A\to 0$ under a certain equivalence relation, here $\Ko$ denotes the ideal of compact operators. The construction can be generalized to a bivariant theory by replacing $\Ko$ with an arbitrary stable $C^*$-algebra $B$ and one obtains an abelian monoid $Ext(A,B)$. In \cite{thom} this construction was put into the equivariant setting although only the invertible elements of $Ext_G(A,B)$ were studied. We will study the full extension monoids.

As is shown in \cite{jeto}, and equivariantly in \cite{thom}, an odd Kasparov $A-B$-module gives an extension of $A$ by $B$ which induces an additive mapping $KK^1_G(A,B)\to Ext_G(A,B)$. It can be shown, as is done in \cite{thom} that this is a bijection to the group $Ext^{-1}_G(A,B)\subseteq Ext_G(A,B)$ of invertible elements. A more straightforward approach is the proof in \cite{jeto} using the Stinespring representation theorem. As a corollary of this proof, if $A$ is nuclear and separable the Choi-Effros lifting theorem implies that $Ext_G(A,B)$ is a group if $G$ is trivial. This is the main motivation of studying extension theory. 

The reason for leaving the category of $C^*$-algebras is that most cohomology theories behave badly on $C^*$-algebras and one needs to look at dense subalgebras, see more in \cite{bejohn}. For example, if we use cohomology and Atiyah-Singers index theorem to calculate the index of a Toeplitz operator this is easily done via an explicit integral in terms of the symbol and its derivatives if the symbol is smooth, see more in \cite{fedo}.

With this as motivation we will extend the $Ext_G$-functor to $*$-algebras which embed into separable $C^*$-algebras and actions which extend to $C^*$-automorphisms. In the first part of this paper we define suitable categories for the first and the second variable of the functor. Then, similarly to the setting with $C^*$-algebras, we will construct a bivariant functor $\mathcal{E}xt_G$ to the category of abelian monoids. In particular there is a natural transformation 
\[\Theta:\mathcal{E}xt_G\to Ext_G\] 
in the category of abelian monoids. An interesting question to study further is what types of elements are in the kernel of the $\Theta$-mapping and if there is some way to make $\Theta$ surjective? 

After that we will move on to study the invertible elements. A rather remarkable result is that the invertible elements are those extensions which arise from a $G$-equivariant algebraic $\mathcal{A}-\I$-Kasparov modules. As an example, we will study the case of extensions of the smooth functions on a compact manifold by the Schatten class operators, in this case the $\Theta$-mapping turns out to be a surjection. At the end of the paper we describe a certain type of elements in the kernel of the $\Theta$-mapping which we will call linear deformations. The linear deformations are analytic in their nature. We end the paper by giving an explicit example of a linear deformation of the ordinary Toeplitz operators on the Hardy space that produces another $\Eg xt$-class but is homotopic to the $\Eg xt$-class defined by the ordinary Toeplitz operators.

\section{Definitions and basic properties}

To begin with we will define the suitable categories. From here on, let $G$ be a second countable locally compact group. We will say that the group action $\alpha :G\to Aut(A)$ acts continuously on the $C^*$-algebra $A$ if $g\mapsto \alpha_g(a)$ is continuous for all $a\in A$. 

\begin{deef}
Let $C^*A_G$ denote the category with objects consisting of pairs $(\mathcal{A},A)$ where  $A$ is a separable $C^*$-algebra with a continuous $G$-action and $\mathcal{A}$ is a $G$-invariant dense $*$-subalgebra. A morphism in $C^*A_G$ between $(\mathcal{A},A)$ to $(\mathcal{A}',A')$ is a $G$-equivariant $*$-homomorphism $\phi:\mathcal{A}\to \mathcal{A}'$ bounded in $C^*$-norm. 
\end{deef}

As an abuse of notation we will denote an object $(\mathcal{A},A)$ in $C^*A_G$ by $\mathcal{A}$ and its latin character $A$ will denote the ambient $C^*$-algebra. Observe that a morphism in $C^*A_G$ is the restriction of an equivariant $*$-homomorphism $\bar{\phi}:A\to A'$ uniquely determined by $\phi$. This follows from that if $\phi:\mathcal{A}\to \mathcal{A}'$ is bounded in $C^*$-norm it extends to $\bar{\phi}:A\to A'$ and since $\phi$ is equivariant $\bar{\phi}$ will also be equivariant. Conversely, an equivariant $*$-homomorphism of $C^*$-algebras is always $C^*$-bounded. When a linear mapping $T:\mathcal{A}\to \mathcal{A}'$, not necessarily equivariant, between two objects is induced by a bounded mapping $\bar{T}:A\to A'$ we will say that $T$ is $C^*$-bounded.

For a $C^*$-algebra $B$ we will denote its multiplier $C^*$-algebra by $\mathcal{M}(B)$ and embed $B$ as an ideal in $\mathcal{M}(B)$. If $B$ has a $G$-action we will equip $\mathcal{M}(B)$ with the induced $G$-action. 

\begin{deef}
If $(\mathfrak{I},I)\in C^*A_G$ satisfies that the $C^*$-algebra $I$ is equivariantly stable, that is $I\otimes \Ko\cong I$ where $\Ko$ has trivial $G$-action, and $\I$ is an ideal in $\mathcal{M}(I)$ the algebra $\mathfrak{I}$ is called a $C^*$-stable $G$-ideal. Let $C^*SI_G$ denote the full subcategory of $C^*A_G$ consisting of $C^*$-stable $G$-ideals.
\end{deef}

We will call a morphism $\psi:\I\to \I'$ of $C^*$-stable $G$-ideals an embedding of $C^*$-stable $G$-ideals if $\psi:I\to I'$ is an isomorphism.

\begin{prop}
\label{mtstab}
For any $C^*$-stable $G$-ideal $\I$ there is an equivariant isomorphism $M_2\otimes I\cong I$ inducing an isomorphism $M_2\otimes \I\cong \I$. The isomorphism is given by the adjoint action of a $G$-invariant unitary operator $V=V_1\oplus V_2:I\oplus I\to I$ between Hilbert modules.
\end{prop}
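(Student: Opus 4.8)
The plan is to produce the unitary $V$ directly from a pair of $G$-invariant isometries in the multiplier algebra $\mathcal{M}(I)$ and to implement all three isomorphisms by the single conjugation $\mathrm{Ad}(V)$.

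First I would extract the isometries from stability. Fix a $G$-equivariant isomorphism $\Phi:I\xrightarrow{\sim}I\otimes\Ko$, where $\Ko$ carries the trivial $G$-action. Choosing $s_1,s_2\in\mathcal{M}(\Ko)=B(\ell^2)$ with $s_i^*s_j=\delta_{ij}$ and $s_1s_1^*+s_2s_2^*=1$ (the standard even/odd split of $\ell^2$), I set $V_i:=\mathcal{M}(\Phi)^{-1}(1\otimes s_i)\in\mathcal{M}(I)$, where $\mathcal{M}(\Phi)$ is the extension of $\Phi$ to multipliers. Because the $G$-action on the factor $\Ko$ is trivial, the action induced on $\mathcal{M}(I\otimes\Ko)$ is of the form $\alpha\otimes\id$ and fixes each $1\otimes s_i$; transporting back through the equivariant $\mathcal{M}(\Phi)$ shows each $V_i$ is $G$-invariant, while the isometry relations $V_i^*V_j=\delta_{ij}$ and $V_1V_1^*+V_2V_2^*=1$ are preserved by the $*$-isomorphism $\mathcal{M}(\Phi)^{-1}$.

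Next I would assemble $V=V_1\oplus V_2$, understood as the adjointable operator $I\oplus I\to I$ given by $(a_1,a_2)\mapsto V_1a_1+V_2a_2$ on the right Hilbert $I$-modules $I\oplus I$ and $I$ (with $\langle a,b\rangle=a^*b$), whose adjoint is $b\mapsto(V_1^*b,V_2^*b)$. The relations above say precisely that $V$ is a $G$-invariant unitary, and under the identifications $\mathcal{L}(I\oplus I)\cong M_2(\mathcal{M}(I))=\mathcal{M}(M_2\otimes I)$ and $\mathcal{L}(I)\cong\mathcal{M}(I)$ the conjugation $\mathrm{Ad}(V)(a_{ij})=\sum_{i,j}V_ia_{ij}V_j^*$ becomes a $G$-equivariant $*$-isomorphism $\mathcal{M}(M_2\otimes I)\xrightarrow{\sim}\mathcal{M}(I)$ with inverse $\mathrm{Ad}(V^*)$; equivariance is immediate from the $G$-invariance of the $V_i$.

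Finally I would restrict $\mathrm{Ad}(V)$ to the two ideals. Since $I$ and $\I$ are ideals in $\mathcal{M}(I)$ and each $V_i\in\mathcal{M}(I)$, the elements $V_ia_{ij}V_j^*$ lie in $I$ (respectively $\I$) whenever the entries $a_{ij}$ do; applying the same observation to $\mathrm{Ad}(V^*)$ shows that $\mathrm{Ad}(V)$ restricts to isomorphisms $M_2\otimes I\cong I$ and $M_2\otimes\I\cong\I$, the latter using only that $\I$ is a $G$-invariant ideal of $\mathcal{M}(I)$. The one point deserving genuine care, as opposed to bookkeeping, is the very first step: checking that the isometries coming from $\Ko$-stability descend to \emph{$G$-invariant} multipliers of $I$, which rests on the induced action on $\mathcal{M}(I\otimes\Ko)$ being trivial on the $B(\ell^2)$-factor. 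Everything after that is a formal consequence of $V$ being a $G$-invariant unitary and of $I$ and $\I$ being ideals.
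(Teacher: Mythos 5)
Your proof is correct and follows essentially the same route as the paper: both obtain two $G$-invariant isometries $V_1,V_2\in\mathcal{M}(I)$ with $V_1V_1^*+V_2V_2^*=1$ by pushing a Cuntz pair from $\mathcal{M}(\Ko)=\Bo(\ell^2)$ (where $G$ acts trivially) through the unital equivariant embedding $\mathcal{M}(\Ko)\hookrightarrow\mathcal{M}(I\otimes\Ko)\cong\mathcal{M}(I)$, and then let $\mathrm{Ad}(V)$ with $V=V_1\oplus V_2$ implement all the isomorphisms. The only difference is that you spell out the bookkeeping (adjointability, the inverse $\mathrm{Ad}(V^*)$, restriction to the ideals) in more detail than the paper does.
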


Notice that $V$ being unitary is equivalent to $V_1, V_2\in \mathcal{M}(I)$ being isometries satisfying 
\[V_1V_1^*+V_2V_2^*=1.\]

\begin{proof}
It is sufficient to construct two $G$-invariant isometries $V_1,V_2\in \mathcal{M}(I)$ such that $V_1V_1^*+V_2V_2^*=1$. Then $V:=V_1\oplus V_2 $ is a $G$-invariant unitary. Thus $V$ will be an isomorphism of Hilbert modules so $Ad \;V:M_2\otimes I\to I$ is an isomorphism and since $\I$ is an ideal $Ad \;V$ induces a isomorphism $M_2\otimes \I\cong \I$.

Let $K$ denote a separable Hilbert space with trivial $G$-action. Choose a unitary $V':K\oplus K\to K$. Let $V_1', V_2'\in \Bo (K)$ be defined by $V'(x_1\oplus x_2):=V_1'x_1 + V_2'x_2$. We may take the isometries  $V_1$ and $V_2$ to be the image of  $V_1'$ and $V_2'$ under the equivariant, unital embedding 
\[\Bo(K)=\mathcal{M}(\Ko)\hookrightarrow \mathcal{M}(I\otimes \Ko)\cong \mathcal{M}(I).\]
\end{proof}

One important class of $C^*$-stable $G$-ideals is the class of symmetrically normed operator ideals such as the Schatten class ideals and the Dixmier ideals (see more in \cite{casu}) over a separable Hilbert space $H$ with a $G$-action. In order to get equivariant stability we need to stabilize the Hilbert space with another Hilbert space with trivial $G$-action. Let $H'$ denote a separable Hilbert space and define 
\[\ellL^p_H:=(\ellL^p(H\otimes H'), \Ko(H\otimes H'))\] 
and analogously for the Dixmier ideal $\ellL^{n+}_H$. The $G$-action on the algebras are the one induced from the $G$-action on $H$.  \\

The main study of this paper are equivariant extensions $0\to \I\to \mathcal{E} \xrightarrow{\phi} \mathcal{A}\to 0$ where $\I$ is a $C^*$-stable $G$-ideal and $\mathcal{A}\in C^*A_G$. In particular we are interested in when such extensions admit $C^*$-bounded splittings of Toeplitz type. 

Consider for example the $0$:th order pseudodifferential extension $\Psi^0(M)$ on a closed Riemannian manifold $M$. This extension is an extension of the smooth functions on the cotangent sphere $S^*M$ by the classical pseudodifferential operators of order $-1$ given by the short exact sequence 
\[0\to  \Psi^{-1}(M) \to \Psi^0(M)\to C^\infty (S^*M)\to 0.\]
The algebra $\Psi^{-1}(M)$ is not $C^*$-stable, but $\Psi^{-1}(M)$ is dense in $\ellL^p(L^2(M))$ for any $p>n$, so the pseudo-differential extension fits in our framework after some modifications. The pseudo-differential extension admits an explicit splitting $T:C^\infty(S^*M)\to \Psi^0(M)$ in terms of Fourier integral operators which is not $C^*$-bounded if $\dim M>1$. Read more about this in Chapter $18.6$ in \cite{horm}. In this setting however, the problem can be mended. In \cite{guille} a $C^*$-bounded splitting is constructed for real analytic manifolds $M$ in terms of Grauert tubes and Toeplitz operators.

We will abuse the notation somewhat by  referring both to the object $\mathcal{E}$ and the extension by $\mathcal{E}$. Observe that the definition implies that there exists a commutative diagram with equivariant, exact rows
\[
\begin{CD}
0  @>>> \I @>>>\mathcal{E} @>\phi>>  \mathcal{A}@>>>0 \\
@. @VVV  @VVV @ VVV@.\\
0  @>>>  I @>>>E @>\bar{\phi} >> A@>>>0 \\
\end{CD} \]
The $*$-homomorphism $\bar{\phi}:E\to A$ is the extension of $\phi$ to $E$. 

\begin{deef}
Two $G$-equivariant extensions $\mathcal{E}$ and $\mathcal{E}'$ of $\mathcal{A}$  by $\I$ are said to be isomorphic if there exists a morphism $\psi:\mathcal{E}\to \mathcal{E}'$ in $C^*A_G$ that fits into a commutative diagram
\begin{equation}
\label{isoext}
\begin{CD}
0  @>>> \I @>>>\mathcal{E} @>\phi>> \mathcal{A}@>>>0 \\
@. @|  @VV \psi V @|@.\\
0  @>>>  \I @>>>\mathcal{E}' @>\phi' >> \mathcal{A}@>>>0 \\
\end{CD} 
\end{equation}
Because of the five lemma, $\psi$ is an isomorphism.
\end{deef}

Choose a linear splitting $\tau:\mathcal{A}\to \mathcal{E}$ and identify $\I$ with an ideal in $\mathcal{E}$. The mapping $\tau$ being a splitting of an equivariant mapping $\mathcal{E}\to\mathcal{A}$ implies that 
\begin{equation}
\label{split}
\tau(ab)-\tau(a)\tau(b), \;\;\tau(a^*)-\tau(a)^*\in \I \quad \mbox{and}  
\end{equation}
\begin{equation}
\label{split2}
\tau(g.a)-g.\tau(a)\in \I \; \forall g\in G.
\end{equation}
Given a $C^*$-stable $G$-ideal $\I$ we define the $G$-$*$-algebra $\mathcal{C}_{\I} :=\mathcal{M}(I)/\I$ and denote by $q_{\I}:\mathcal{M}(I)\to \mathcal{C}_{\I}$ the canonical surjection.  By the equations \eqref{split} and \eqref{split2} the mapping $q_\I\tau:\mathcal{A}\to \mathcal{C}_\I$ is an equivariant $*$-homomorphism. We will call the mapping $\beta_{\mathcal{A}}:=q_\I\tau$ the Busby mapping for the extensions $\Eg$. A Busby mapping that is $C^*$-bounded after composing with $\mathcal{C}_\I\to \mathcal{M}(I)/I$ is called bounded. A Busby mapping which can be lifted to a $C^*$-bounded $G$-equivariant $*$-homomorphism of $\mathcal{A}$ is called trivial.\\

For an equivariant $*$-homomorphism $\beta:\mathcal{A}\to \mathcal{C}_\I$ we can define the $*$-algebra 
\[\mathcal{E}_\beta:=\{a\oplus x\in \mathcal{A}\oplus \mathcal{M}(I): \beta(a)=q_\I(x)\}.\] 
The $*$-algebra $\mathcal{E}_\beta$ is closed under the $G$-action on $\mathcal{A}\oplus \mathcal{M}(I)$ so it is a $G$-$*$-algebra. Denote the norm closure of $\mathcal{E}_\beta$ in $A\oplus \mathcal{M}(I)$ by $E_\beta$. We have an injection $\I\to \mathcal{E}_\beta$  and a surjection $\mathcal{E}_\beta\to \mathcal{A}$. The kernel of $\mathcal{E}_\beta\to \mathcal{A}$ is $\I$, so the sequence $0\to \I\to \mathcal{E}_\beta\to \mathcal{A}\to 0$ is exact and the arrows are equivariant. The $*$-algebra $\mathcal{E}_\beta$ is a well defined object in $C^*A_G$, because Theorem $2.1$ of \cite{thom} states that the induced $G$-action on $E_\beta$ is continuous provided it is continuous on $I$ and on $A$. 

\begin{prop}
\label{spbu}
The equivariant $*$-homomorphism $\beta:\mathcal{A}\to \mathcal{C}_\I$ determines the extension up to a isomorphism, i.e  if $\mathcal{E}$ has Busby mapping $\beta$, $\mathcal{E}$ is isomorphic to  $\mathcal{E}_\beta$.
\end{prop}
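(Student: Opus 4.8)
The plan is to write down an explicit isomorphism $\psi:\mathcal{E}\to\mathcal{E}_\beta$, verify it is a morphism in $C^*A_G$ fitting the diagram \eqref{isoext}, and then invoke the five lemma to upgrade it to an isomorphism. The key ingredient is the canonical multiplier map. Since $I$ is an ideal in the ambient $C^*$-algebra $E$, left and right multiplication define a $*$-homomorphism $\sigma:E\to\mathcal{M}(I)$ which restricts to the canonical inclusion on $I$ and is equivariant for the induced $G$-actions; indeed, the $\beta_{\mathcal{A}}=q_\I\tau$ of the definition is exactly $q_\I\circ\sigma\circ\tau$, with $\tau(a)$ read inside $\mathcal{M}(I)$ through $\sigma$. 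Restricting $\sigma$ to $\mathcal{E}$ and pairing it with $\phi$, I would set
\[\psi(e):=\phi(e)\oplus\sigma(e)\in\mathcal{A}\oplus\mathcal{M}(I).\]

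First I would verify that $\psi$ lands in $\mathcal{E}_\beta$, i.e. that $\beta(\phi(e))=q_\I(\sigma(e))$ for every $e\in\mathcal{E}$. Since $\phi\tau=\id_{\mathcal{A}}$ we have $\phi(\tau(\phi(e))-e)=0$, so $\tau(\phi(e))-e$ lies in $\ker\phi=\I$. As $\sigma$ restricts to the inclusion $\I\hookrightarrow\mathcal{M}(I)$ and $\I$ is by hypothesis an ideal of $\mathcal{M}(I)$, it follows that $\sigma(\tau(\phi(e)))-\sigma(e)\in\I$, whence $q_\I(\sigma(\tau(\phi(e))))=q_\I(\sigma(e))$. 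The left-hand side is precisely $\beta(\phi(e))$, which gives the required compatibility and places $\psi(e)$ in $\mathcal{E}_\beta$.

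Next I would check that $\psi$ is a morphism of $C^*A_G$: it is a $*$-homomorphism and $G$-equivariant because both $\phi$ and $\sigma$ are, and it is $C^*$-bounded because it is the restriction of the $*$-homomorphism $E\to E_\beta$, $e\mapsto\bar{\phi}(e)\oplus\sigma(e)$, of $C^*$-algebras. Finally I would confirm that $\psi$ fits into \eqref{isoext}: for $x\in\I$ one has $\phi(x)=0$ and $\sigma(x)=x$, so $\psi$ agrees with the canonical injection $\I\to\mathcal{E}_\beta$; and the projection $\phi':\mathcal{E}_\beta\to\mathcal{A}$, $a\oplus x\mapsto a$, satisfies $\phi'\psi=\phi$. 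With the diagram in place, the five lemma (as already noted after the definition of isomorphic extensions) shows $\psi$ is an isomorphism.

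I expect the only genuinely delicate point to be the compatibility $\beta(\phi(e))=q_\I(\sigma(e))$, since it hinges on identifying the multiplier image $\sigma(e)$ with the value of the chosen splitting $\tau$ modulo $\I$; once the relation $\tau(\phi(e))-e\in\I$ is combined with the fact that $\I$ is an ideal of $\mathcal{M}(I)$, the rest is formal. A secondary point worth spelling out is that $\sigma$ is genuinely $G$-equivariant into $\mathcal{M}(I)$ with its induced action, which is exactly what keeps $\psi$ inside the category $C^*A_G$.
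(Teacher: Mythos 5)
Your proof is correct and takes essentially the same route as the paper's: the paper defines $\psi(x):=\phi(x)\oplus x$, with the second coordinate implicitly read in $\mathcal{M}(I)$ through the multiplier map you call $\sigma$, and concludes by the five lemma. Your explicit verification that $\psi$ lands in $\mathcal{E}_\beta$, via $\tau(\phi(e))-e\in\ker\phi=\I$, just spells out what the paper leaves tacit.
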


\begin{proof}
Suppose that $\beta$ is Busby mapping for $\Eg$. Define $\psi: \mathcal{E}\to \mathcal{E}_\beta$ as
\[\psi(x):=\phi(x)\oplus x.\]
Since $\phi$ is equivariant, so is $\psi$. This makes the diagram \eqref{isoext} commutative, thus $\psi$ is an isomorphism of $G$-equivariant extensions. 
\end{proof}

The most useful class of $G$-equivariant extensions are the ones arising from algebraic $\mathcal{A}-\I$-Kasparov modules. This is defined as an algebraic generalization of Kasparov modules for $C^*$-algebras, see more in \cite{jeto}. 

\begin{deef}
A $G$-equivariant algebraic $\mathcal{A}-\I$-Kasparov module is a $C^*$-bounded $G$-equivariant representation $\pi:\mathcal{A}\to \mathcal{M}(I)$ and an almost $G$-invariant symmetry $F\in \mathcal{M}(I)$ that is almost commuting with $\pi(\mathcal{A})$, that is:  
\[g.F-F \in \I\;\; \forall\;g\in G\quad\mbox{and}\quad [F,\pi(a)]\in \I\;\; \forall \;a\in \mathcal{A}.\]
\end{deef}

Since $F$ is a grading we can define the projection $P:=(F+1)/2$. The pair $(\pi,F)$ induces a $*$-homomorphism
\begin{equation}
\label{t}
\beta:\mathcal{A}\to \mathcal{C}_\I,\; a\mapsto q_\I(P\pi(a)P).
\end{equation}
The requirement $[F,\pi(a)]\in \I$ together with $g.F-F\in \I$ implies that $\beta$ is an equivariant $*$-homomorphism.\\

Let $B_G(\mathcal{A},\I)$ denote the set of bounded $G$-equivariant Busby mappings on $\mathcal{A}$. This is the correct set to study extensions in. By Proposition \ref{spbu} the set of $G$-equivariant Busby mappings is the same set as the set of isomorphism classes of $G$-equivariant extensions. But we need some useful notion of equivalence of extensions, or by the previous reasoning an equivalence relation on $B_G(\mathcal{A},\I)$. For an object  $\mathfrak{I}\in C^*SI_G$ we define the almost invariant weakly unitaries 
\[U^{aw}(\I):=q_\I^{-1}(\{v\in \mathcal{C}_\I: g.v=v, \;v^*v=vv^*=1\}).\]
Let the almost invariant unitaries be defined as $U^a(\I):=U^{aw}(\I)\cap U(\mathcal{M}(\I))$.

\begin{deef}
Strong equivalence on $B_G(\mathcal{A},\I)$ is the equivalence of Busby mappings by the adjoint $U^a(\I)$-action on $\mathcal{C}_\I$. Weak equivalence on  $B_G(\mathcal{A},\I)$ is that of the adjoint $U^{aw}(\I)$-action on $\mathcal{C}_\I$.

Let $E_G(\mathcal{A},\I)$ denote the set of strong equivalence classes of $B_G(\mathcal{A},\I)$ and let $E^w_G(\mathcal{A},\I)$ denote the set of weak equivalence classes. Similarly let $D_G(\mathcal{A},\I)$ denote the set of strong equivalence classes of trivial Busby mappings and let $D^w_G(\mathcal{A},\I)$ denote the set of weak equivalence classes of trivial Busby maps.
\end{deef}

The isomorphism $\lambda :M_2\otimes \Co _\I\to \Co _\I$ induced by $Ad \; V$ from Proposition \ref{mtstab} can be used to define the sum of two $G$-equivariant Busby mappings $\beta_1, \beta_2\in B_G(\mathcal{A},\I)$ as
\[\beta_1+\beta _2:=\lambda \circ(\beta _1\oplus \beta_2):\mathcal{A}\to \mathcal{C}_\I.\]

\begin{prop}
The binary operation $+$ on $B_G(\mathcal{A},\I)$ induces a well defined abelian semigroup structure on $E_G(\mathcal{A},\I)$ independent of the choice of the unitary $V=V_1\oplus V_2$. The set $D_G(\mathcal{A},\I)$ is a subsemigroup.  
\end{prop}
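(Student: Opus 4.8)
The plan is to reduce the whole statement to a single observation about conjugating by $G$-invariant Hilbert module unitaries, which I will call the \emph{transport principle}: if $W,W':M_n\otimes I\to I$ are two $G$-invariant unitaries of Hilbert modules and $\gamma:\mathcal{A}\to M_n\otimes\Co_\I$ is a bounded equivariant $*$-homomorphism, then the Busby mappings obtained by transporting $\gamma$ through $\mathrm{Ad}\,W$ and through $\mathrm{Ad}\,W'$ are strongly equivalent. Indeed $u:=W(W')^*$ is a genuine unitary in $\mathcal{M}(I)$, and it is $G$-invariant because $W$ and $W'$ are, so $u\in U^a(\I)$; conjugating the transport of $\gamma$ through $\mathrm{Ad}\,W'$ by $\mathrm{Ad}\,q_\I(u)$ reproduces the transport through $\mathrm{Ad}\,W$. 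I would record this once and then invoke it for commutativity, associativity, and independence of $V$.

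First I verify that $+$ is internal and descends to $E_G(\mathcal{A},\I)$. Since $\lambda$ is induced by $\mathrm{Ad}\,V$, which is an equivariant $*$-isomorphism of multiplier algebras, the sum $\beta_1+\beta_2=\lambda\circ(\beta_1\oplus\beta_2)$ is again a bounded equivariant $*$-homomorphism, hence lies in $B_G(\mathcal{A},\I)$. For well-definedness, suppose $\beta_i'=\mathrm{Ad}\,q_\I(u_i)\circ\beta_i$ with $u_i\in U^a(\I)$ for $i=1,2$. Then $u_1\oplus u_2$ is a genuine unitary in $\mathcal{M}(M_2\otimes I)$ whose image in $M_2\otimes\Co_\I$ is $G$-invariant, and it conjugates $\beta_1\oplus\beta_2$ to $\beta_1'\oplus\beta_2'$. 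Transporting through $\lambda$ and using that $V$ is $G$-invariant, the element $V(u_1\oplus u_2)V^*$ is a genuine unitary in $\mathcal{M}(I)$ belonging to $U^a(\I)$, and it conjugates $\beta_1+\beta_2$ to $\beta_1'+\beta_2'$. Hence $+$ is well defined on strong equivalence classes.

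Commutativity follows by conjugating $\beta_1\oplus\beta_2$ with the flip $\sigma\in M_2$ and transporting through $\lambda$: this realizes $\beta_2+\beta_1$ as the conjugate of $\beta_1+\beta_2$ by the $G$-invariant unitary $V\sigma V^*\in U^a(\I)$. For associativity I would expand both bracketings of $\beta_1+\beta_2+\beta_3$ using $V=V_1\oplus V_2$ with $V_1V_1^*+V_2V_2^*=1$: a short computation presents $(\beta_1+\beta_2)+\beta_3$ as the transport of $\beta_1\oplus\beta_2\oplus\beta_3$ through the unitary assembled from the isometries $V_1V_1,\ V_1V_2,\ V_2$, and $\beta_1+(\beta_2+\beta_3)$ as the transport through the unitary assembled from $V_1,\ V_2V_1,\ V_2V_2$. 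In each case the three range projections sum to $1$, so these isometries genuinely combine into $G$-invariant unitaries $I^{\oplus 3}\to I$, and the transport principle with $n=3$ identifies the two classes. Independence of the choice of $V$ is the same argument for $n=2$: two admissible choices $V,\tilde V$ are both $G$-invariant unitaries $M_2\otimes I\to I$, so the induced sums agree in $E_G(\mathcal{A},\I)$.

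Finally, for $D_G(\mathcal{A},\I)$ it suffices to check that the collection of trivial Busby mappings is closed under $+$, since $D_G$ is exactly its image in $E_G(\mathcal{A},\I)$. If $\beta_i=q_\I\pi_i$ with $\pi_i:\mathcal{A}\to\mathcal{M}(I)$ a $C^*$-bounded equivariant $*$-homomorphism, then $\pi:=\mathrm{Ad}\,V\circ(\pi_1\oplus\pi_2)$, which sends $a$ to $V_1\pi_1(a)V_1^*+V_2\pi_2(a)V_2^*$, is again a $C^*$-bounded equivariant $*$-homomorphism, and $q_\I\pi=\beta_1+\beta_2$, so the sum is trivial. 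I expect the main obstacle to be the associativity bookkeeping: one must track the composite isometries carefully, verify that their range projections sum to the identity so that they really define $G$-invariant unitaries $I^{\oplus 3}\to I$, and repeatedly confirm that each intermediate operator is simultaneously a genuine unitary in $\mathcal{M}(I)$ and $G$-invariant modulo $\I$, that is, a member of $U^a(\I)$.
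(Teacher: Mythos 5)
Your proof is correct, and it is essentially the standard argument: the paper itself offers no proof here, deferring to Lemma $3.1$ of Thomsen, and your ``transport principle'' (conjugation by $G$-invariant Hilbert module unitaries $I^{\oplus n}\to I$, which lie in $U^a(\I)$ and hence implement strong equivalences) is precisely the mechanism that proof uses, so you have simply supplied the details the paper omits. The one step worth making explicit is that $\mathrm{Ad}\,W$ for such a unitary $W$ does descend to a map $M_n\otimes\Co_\I\to\Co_\I$; this holds because the components of $W$ are isometries in $\mathcal{M}(I)$ and $\I$ is an ideal in $\mathcal{M}(I)$, exactly as in the paper's proof that $M_2\otimes\I\cong\I$.
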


The proof of the above proposition is the same as the proof of Lemma $3.1$ in \cite{thom} where the semigroup of equivariant extensions of a $C^*$-algebra is constructed. Two $G$-equivariant Busby mappings $\beta_1, \beta_2\in B_G(\mathcal{A},\I)$ are said to  be stably equivalent if they differ by trivial Busby mappings. That is, if there exist $C^*$-bounded, $G$-equivariant $*$-homomorphisms $\pi_1,\pi_2:\mathcal{A}\to \mathcal{M}(I)$ such that 
\[\beta_1\oplus q_\I\pi_1 \equiv \beta _2 \oplus q_\I \pi_2 :\mathcal{A}\to M_2\otimes \mathcal{C}_\I.\]
Stable equivalence induces a well defined equivalence relation on $E_G(\mathcal{A},\I)$ and $E^w_G(\mathcal{A},\I)$.

\begin{deef}
We define $\mathcal{E}xt_G(\mathcal{A},\I)$ as the monoid of stable equivalence classes of $E_G(\mathcal{A},\I)$ and $\mathcal{E}xt^w_G(\mathcal{A},\I)$ as the monoid of stable equivalence classes of $E^w_G(\mathcal{A},\I)$. For $G=\{1\}$ we denote the $\mathcal{E}xt$-invariants by $\mathcal{E}xt(\mathcal{A},\I)$ and $\mathcal{E}xt^w(\mathcal{A},\I)$.
\end{deef}

The monoids  $\mathcal{E}xt_G(\mathcal{A},\I)$ and $\mathcal{E}xt^w_G(\mathcal{A},\I)$ coincide with the semigroup quotients $E_G(\mathcal{A},\I)/D_G(\mathcal{A},\I)$, respectively  $E^w_G(\mathcal{A},\I)/D^w_G(\mathcal{A},\I)$. It has a zero-element since the class of an element in $D_G(\mathcal{A},\I)$ is zero. 

If we are given a $G$-equivariant extension $\mathcal{E}$ of $\mathcal{A}$ we will denote the class in $\mathcal{E}xt_G(\mathcal{A},\I)$ of its $G$-equivariant Busby mapping $\beta$ by $[\mathcal{E}]$ or by $[\beta]$.

\begin{prop}
\label{cstaanta}
If  $\I=I$ there are isomorphisms  
\[\mathcal{E}xt_G^w(\mathcal{A},I)\cong \mathcal{E}xt_G(\mathcal{A},I) \cong \mathcal{E}xt_G(A,I)\equiv Ext_G(A,I)\cong Ext_G^w(A,I).\] 
\end{prop}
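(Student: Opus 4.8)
The plan is to read the chain of five relations as two routine identifications flanking a single genuine theorem. The central $\equiv$ is definitional, the two inner isomorphisms merely transport the invariant between the dense algebra $\mathcal{A}$ and its completion $A$, and the two outer isomorphisms $\mathcal{E}xt^w_G\cong\mathcal{E}xt_G$ and $Ext_G\cong Ext^w_G$ assert that weak and strong equivalence agree once $\I=I$. I would first dispose of the transport and the definitional step, thereby reducing the entire statement to the single assertion that for the stable $C^*$-algebra $I$ weak and strong equivalence coincide on $Ext_G(A,I)$.

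For the transport I would set up the restriction--extension bijection on Busby maps. When $\I=I$ the algebra $\mathcal{C}_\I=\mathcal{M}(I)/I$ is already the target appearing in the $C^*$-definition, and ``bounded'' means simply $C^*$-bounded. A bounded $G$-equivariant Busby map $\beta:\mathcal{A}\to\mathcal{C}_I$ extends, by density of $\mathcal{A}$ in $A$ together with $C^*$-boundedness, to an equivariant $*$-homomorphism $\bar\beta:A\to\mathcal{C}_I$ (equivariance survives because each $g\in G$ acts isometrically on $\mathcal{C}_I$, so $g.(\cdot)$ may be interchanged with the limit); conversely any Busby invariant $A\to\mathcal{C}_I$ restricts to a $C^*$-bounded equivariant Busby map on $\mathcal{A}$, and these operations are mutually inverse. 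I would then check that this bijection intertwines the sum defined through the isomorphism $\lambda$ of Proposition \ref{mtstab}, carries trivial Busby maps to trivial ones, and respects conjugation by $U^a(I)$ and by $U^{aw}(I)$, since these unitaries live in $\mathcal{M}(I)$ and act identically on $\beta$ and on $\bar\beta$. Hence the bijection descends to monoid isomorphisms on both the strong and the weak quotients, giving $\mathcal{E}xt_G(\mathcal{A},I)\cong\mathcal{E}xt_G(A,I)$ and $\mathcal{E}xt^w_G(\mathcal{A},I)\cong\mathcal{E}xt^w_G(A,I)$. Finally, specializing the construction of $\mathcal{E}xt_G$ to the object $(A,A)$ reproduces verbatim the definition of $Ext_G(A,I)$ from \cite{thom}, which is exactly the content of the symbol $\equiv$.

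The whole proposition then collapses to proving $Ext_G(A,I)\cong Ext^w_G(A,I)$, that is, that weakly equivalent bounded Busby maps are stably strongly equivalent; this is the \emph{main obstacle}. The natural surjection $\mathcal{E}xt_G\to\mathcal{E}xt^w_G$ exists because $U^a(I)\subseteq U^{aw}(I)$, so only injectivity is at stake, and the obstruction to lifting a $G$-invariant unitary $w\in\mathcal{C}_I$ to a genuine invariant unitary of $\mathcal{M}(I)$ is an index class in $K_1(\mathcal{C}_I)\cong K_0(I)$ which need not vanish before stabilization. I would kill it by the classical rotation/Eilenberg-swindle argument carried out equivariantly: passing to $w\oplus w^*\in M_2\otimes\mathcal{C}_I$, the scalar rotation homotopy---built from $G$-invariant scalar matrices, hence running through invariant unitaries since $w$ is invariant---connects $w\oplus w^*$ to $1$, so it lifts to an invariant unitary of $\mathcal{M}(M_2\otimes I)\cong\mathcal{M}(I)$ via Proposition \ref{mtstab}. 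Absorbing the redundant $w^*$-summand into an absorbing trivial extension, available because $I$ is stable, then shows that $\mathrm{Ad}(w)\beta$ and $\beta$ become strongly equivalent after adding trivials. The delicate point throughout is to perform both the lifting and the absorption $G$-equivariantly; the invariance of $w$ is what guarantees that every intermediate unitary, and the absorbing homomorphism, can be chosen invariant, so the swindle remains inside $U^a(I)$ and the stable-equivalence relation, yielding the required injectivity and completing the chain.
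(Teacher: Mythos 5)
Your proposal is correct and follows essentially the same route as the paper: the middle identifications are handled by the same extension/restriction of Busby maps and the definitional specialization to $(A,A)$, and the weak-versus-strong step is settled by the same $2\times 2$ trick, conjugating $\beta\oplus 0$ by $w\oplus w^*$ and lifting that to an almost invariant unitary of $\mathcal{M}(M_2\otimes I)$. The only cosmetic difference is that you inline the rotation-homotopy proof of the lifting (and place the argument at the $C^*$-level), where the paper simply cites Proposition $3.4.1$ of \cite{brbl} and runs the identical argument directly on $\mathcal{E}xt_G^w(\mathcal{A},I)\cong\mathcal{E}xt_G(\mathcal{A},I)$, obtaining the $C^*$-case as the special case $\mathcal{A}=A$.
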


\begin{proof}
We will prove the existence of the first and the second isomorphism. The proof of the last isomorphism is a special case of the first isomorphism for $\mathcal{A}=A$.

To prove the existence of the first isomorphism it is sufficient to show that weakly equivalent $G$-equivariant Busby mappings are strongly equivalent up to stable equivalence. Assume that $\beta_1, \beta_2\in B_G(\mathcal{A},\I)$ are weakly equivalent via the almost invariant weakly unitary $U\in U^{aw}(\I)$. Then $\beta_1\oplus 0$ and $\beta_2\oplus 0$ are weakly equivalent via the almost invariant weakly unitary $U\oplus U^*$. But the operator $U\oplus U^*$ lifts to a unitary $\tilde{U}\in \mathcal{M}(M_2\otimes I)$ since $\mathcal{C}_\I$ is a $C^*$-algebra. In fact $\tilde{U}\in U^a(M_2\otimes\I)$ since $U$ is almost invariant. Thus $\beta_1\oplus 0$ and $\beta_2\oplus 0$ are strongly equivalent. For the proof that $U\oplus U^*$ lifts to a unitary, see Proposition $3.4.1$ in \cite{brbl}.

The second isomorphism is given by the mapping $\mathcal{E}xt_G(\mathcal{A},I)\to \mathcal{E}xt_G(A,I)$, $[\mathcal{E}]\mapsto [E]$. In terms of the $G$-equivariant Busby mapping $\beta$ the mapping is given by $[\beta]\mapsto [\bar{\beta}]$, since $\mathcal{A}$ is dense and $\beta$ is bounded by assumption this is a surjection and $\bar{\beta}$ determines $\beta$ uniquely.
\end{proof}

The constructions of $Ext_G$ and $Ext_G^w$ are the same as $\mathcal{E}xt_G$ and $\mathcal{E}xt_G^w$ but with $C^*$-algebras. These constructions can be found in \cite{bdf}, \cite{jeto} and \cite{thom}. Proposition \ref{cstaanta} is a mild generalization of Proposition $15.6.4$ in \cite{brbl}. The proof is the same although $\mathcal{A}$ does not need to be a $C^*$-algebra.

Since the two theories are very similar we will focus on $\mathcal{E}xt_G$. All results stated in this paper are easily verified to also hold for $\mathcal{E}xt_G^w$.

\section{Functoriality of $\mathcal{E}xt_G$}

In this section we will prove that $\mathcal{E}xt_G$ is a functor to the category $Mo^{ab}$ of abelian monoids. We define this category to have objects of abelian monoids and a morphism is an additive mapping $k:M_1\to M_2$ such that $k(0)=0$. We know how $\mathcal{E}xt_G$ acts on the objects of  $C^*A_G$ and $C^*SI_G$. What needs to be defined is the action of $\mathcal{E}xt_G$ on the morphisms. We begin by showing that $\mathcal{E}xt_G$ depends covariantly on $\I$.

Let $\psi:\I\to \I'$ be a morphism of $C^*$-stable $G$-ideals. By definition $\psi$ can be extended to an equivariant mapping $\mathcal{M}(I)\to \mathcal{M}(I')$ which induces an equivariant mapping $q_\psi:\mathcal{C}_{\I} \to \mathcal{C}_{\I'}$. Define $\psi_* :E_G(\mathcal{A},\I)\to E_G(\mathcal{A},\I')$ by $\psi_*[\beta]:=[q_\psi\circ\beta]$. Clearly, $\psi_*[\beta]$ is independent of the stable equivalence class of $[\beta]$. Hence $\psi$ induces a well defined mapping 
\[\psi_*:\mathcal{E}xt_G(\mathcal{A},\I)\to \mathcal{E}xt_G(\mathcal{A},\I').\]
Since $\psi_*$ acting on a trivial extension gives a trivial extension we have a homomorphism of monoids. \\

Let us move on to proving that $\mathcal{E}xt_G$ depends contravariantly on $\mathcal{A}$. Let $\phi:\mathcal{A}\to \mathcal{A}'$ be a morphism in $C^*A_G$. Take a $G$-equivariant Busby mapping $\beta$ of $\mathcal{A}'$. Then we can define a $G$-equivariant Busby mapping $\phi^*\beta:=\beta\circ\phi$ of $\mathcal{A}$. This clearly depends on neither strong equivalence class nor stable equivalence class of the $G$-equivariant Busby mapping. If $\beta$ is trivial it follows that $\phi^*\beta$ is trivial so we have a morphism of monoids 
\[\phi^*:\mathcal{E}xt_G(\mathcal{A}',\I)\to \mathcal{E}xt_G(\mathcal{A},\I).\]

We have now proved the following proposition.

\begin{prop}
The functor $\mathcal{E}xt_G:C^*A_G\times C^*SI_G \to Mo^{ab}$ is a well defined functor. It is covariant in $\I$ and contravariant in $\mathcal{A}$.
\end{prop}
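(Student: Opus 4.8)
The plan is to verify that the two assignments defined just before the statement---the covariant $\psi_*$ in the ideal variable and the contravariant $\phi^*$ in the algebra variable---genuinely assemble into a bifunctor into $Mo^{ab}$. Since the preceding paragraphs already established that each assignment sends objects to abelian monoids and morphisms to monoid homomorphisms, the remaining content is purely the functor axioms, checked separately in each variable and then their compatibility.

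First I would check the axioms for the covariant variable. For identities, if $\psi = \id_\I$ then the induced map $\mathcal{M}(I) \to \mathcal{M}(I)$ is the identity, hence $q_\psi = \id$ on $\mathcal{C}_\I$, so $\psi_*[\beta] = [\id \circ \beta] = [\beta]$. For composition, given $\psi:\I \to \I'$ and $\psi':\I' \to \I''$, I would argue that $(\psi'\circ\psi)$ extends to $\mathcal{M}(I) \to \mathcal{M}(I'')$ as the composite of the individual extensions (using the uniqueness of the multiplier extension noted after Definition~1.1), so that $q_{\psi'\circ\psi} = q_{\psi'}\circ q_\psi$, giving $(\psi'\psi)_*[\beta] = [q_{\psi'}q_\psi\beta] = \psi'_*\psi_*[\beta]$. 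The contravariant variable is even more direct: $(\id_{\mathcal{A}})^*\beta = \beta\circ\id = \beta$, and for $\phi:\mathcal{A}\to\mathcal{A}'$, $\phi':\mathcal{A}'\to\mathcal{A}''$ one has $(\phi'\circ\phi)^*\beta = \beta\circ\phi'\circ\phi = \phi^*(\phi'^*\beta)$, which is the contravariant composition law $(\phi'\phi)^* = \phi^*\phi'^*$.

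The genuinely bifunctorial point is that the two variances commute: for $\phi:\mathcal{A}\to\mathcal{A}'$ and $\psi:\I\to\I'$ I would show $\psi_*\circ\phi^* = \phi^*\circ\psi_*$ as maps $\mathcal{E}xt_G(\mathcal{A}',\I)\to\mathcal{E}xt_G(\mathcal{A},\I')$. Tracing a Busby mapping $\beta$ of $\mathcal{A}'$ through both composites gives $q_\psi\circ(\beta\circ\phi)$ on one side and $(q_\psi\circ\beta)\circ\phi$ on the other, and these agree by associativity of composition. This shows the two partial actions are compatible, which is exactly what is needed for a well-defined functor on the product category $C^*A_G \times C^*SI_G$, where a morphism $(\phi,\psi)$ acts as $\psi_*\phi^*$.

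I expect the main (and only mild) obstacle to be bookkeeping at the level of representatives rather than any substantive mathematics: one must confirm that each equality holds already on the set $B_G(\mathcal{A},\I)$ of bounded Busby mappings, and that every map involved respects the passage to strong- and stable-equivalence classes, so that the identities descend to $\mathcal{E}xt_G$. The fact that $q_\psi$ and precomposition by $\phi$ are each additive and send trivial mappings to trivial mappings---already recorded in the text---guarantees they are $Mo^{ab}$-morphisms, so once the set-level identities are verified the functor axioms follow immediately.
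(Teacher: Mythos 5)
Your proposal is correct and follows the same route as the paper: the paper's ``proof'' consists precisely of the two preceding paragraphs constructing $\psi_*$ and $\phi^*$ and checking they are well defined on stable equivalence classes and send trivial Busby mappings to trivial ones, after which the functor axioms are left implicit. You simply make explicit the identity, composition, and compatibility checks that the paper treats as immediate.
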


As noted above, an extension $\mathcal{E}$ of the algebra $\mathcal{A}$ by $\I$ gives rise to an extension $E$ of  $A$ by $I$. This procedure defines a mapping $E_G(\mathcal{A},\I)\to E_G(A,I)$ which respects stable equivalences. 

Let $C^*_G$ denote the category of separable $C^*$-algebras with a continuous $G$-action and $SC^*_G$ the full subcategory of equivariantly stable objects in $C^*_G$. We can define an essentially surjective functor 
\[\Gamma_1 :C^*A_G\times C^*SI_G\to C^*_G\times SC^*_G,\] 
\[((\mathcal{A},A),(\I,I))\mapsto (A,I).\]
Its right adjoint is the full and faithful functor 
\[\Gamma_2 :C^*_G\times SC^*_G\to C^*A_G\times C^*SI_G\] 
\[(A,I)\mapsto ((A,A),(I,I)).\] 
Notice that $\Gamma_1\Gamma_2$ is the identity functor on $C^*_G\times SC^*_G$. Define the functor 
\[Ext_G:C^*_G\times SC^*_G\to Mo^{ab}\quad  \mbox{by} \quad Ext_G:=\mathcal{E}xt_G \circ \Gamma _2.\]
As noted above this definition coincides with the definition of the $Ext_G$-functor in \cite{bdf} and \cite{jeto}.

\begin{prop}
The mapping $\Theta$ defines a natural transformation 
\[\Theta :\mathcal{E}xt_G\to Ext_G\circ \Gamma_1.\]
\end{prop}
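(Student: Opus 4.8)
The plan is to spell out the component maps of $\Theta$ and then check naturality in each variable separately, gluing them at the end. For an object $(\mathcal{A},\I)$ write $q_I:\mathcal{M}(I)\to\mathcal{C}_I$ for the quotient onto $\mathcal{C}_I:=\mathcal{M}(I)/I$ and let $\pi_\I:\mathcal{C}_\I=\mathcal{M}(I)/\I\to \mathcal{C}_I$ be the canonical surjection coming from $\I\subseteq I$, so that $\pi_\I\circ q_\I=q_I$. Given a bounded $G$-equivariant Busby mapping $\beta:\mathcal{A}\to\mathcal{C}_\I$, by definition $\pi_\I\circ\beta:\mathcal{A}\to\mathcal{C}_I$ is $C^*$-bounded and $G$-equivariant, hence extends uniquely to a $G$-equivariant $*$-homomorphism $\bar\beta:A\to\mathcal{C}_I$, which is exactly the Busby mapping of the associated $C^*$-extension $E$. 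I would define $\Theta_{(\mathcal{A},\I)}:\mathcal{E}xt_G(\mathcal{A},\I)\to (Ext_G\circ\Gamma_1)(\mathcal{A},\I)=Ext_G(A,I)$ by $[\beta]\mapsto[\bar\beta]$. That this is a well-defined monoid homomorphism is mostly already recorded before the statement: it respects stable equivalence; it sends trivial mappings to trivial ones, since if $\beta=q_\I\pi_0$ with $\pi_0$ a $C^*$-bounded lift then $\pi_\I\beta=q_I\pi_0$ lifts to $q_I\bar\pi_0$; it is additive because the sums on both sides are induced by the adjoint action of one and the same $G$-invariant unitary $V\in\mathcal{M}(I)$ from Proposition \ref{mtstab}, with which $\pi_\I$ is visibly compatible; and it preserves strong and weak equivalence since $\pi_\I$ carries an element of $U^a(\I)$ to an invariant unitary of $\mathcal{C}_I$. (Conceptually $\Theta$ is the image under $\mathcal{E}xt_G$ of the unit $(\iota_{\mathcal{A}},\iota_{\I})$ of the adjunction $\Gamma_1\dashv\Gamma_2$, but the mixed variance of $\mathcal{E}xt_G$ blocks a one-line whiskering argument, so I verify naturality by hand.)

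With the components in hand the real work is naturality, and I would do the covariant variable first. Let $\psi:\I\to\I'$ be a morphism of $C^*$-stable $G$-ideals and $\tilde\psi:\mathcal{M}(I)\to\mathcal{M}(I')$ its equivariant extension, which carries $\I$ into $\I'$ and $I$ into $I'$, hence descends to both $q_\psi:\mathcal{C}_\I\to\mathcal{C}_{\I'}$ and $\bar q_\psi:\mathcal{C}_I\to\mathcal{C}_{I'}$. The key identity is the strict commutativity $\pi_{\I'}\circ q_\psi=\bar q_\psi\circ\pi_\I$, which is immediate on cosets. Chasing a Busby mapping $\beta$ then gives $\Theta(\psi_*[\beta])=[\,\overline{\pi_{\I'}q_\psi\beta}\,]=[\,\overline{\bar q_\psi\pi_\I\beta}\,]=[\bar q_\psi\bar\beta]=\bar\psi_*\,\Theta[\beta]$, where the third equality holds because $\bar q_\psi\bar\beta$ is a $C^*$-bounded $G$-equivariant homomorphism $A\to\mathcal{C}_{I'}$ extending $\bar q_\psi\pi_\I\beta$ off the dense subalgebra $\mathcal{A}$, hence is the unique such extension. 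This is the naturality square in $\I$.

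Next I would treat the contravariant variable. For $\phi:\mathcal{A}\to\mathcal{A}'$ with $C^*$-extension $\bar\phi:A\to A'$ one has $\phi^*\beta=\beta\circ\phi$, and I must compare $\Theta(\phi^*[\beta])=[\,\overline{\pi_\I\beta\phi}\,]$ with $\bar\phi^*\Theta[\beta]=[\bar\beta\circ\bar\phi]$. Both $\overline{\pi_\I\beta\phi}$ and $\bar\beta\bar\phi$ are $C^*$-bounded $G$-equivariant $*$-homomorphisms $A\to\mathcal{C}_I$, and on the dense subalgebra $\mathcal{A}$ each restricts to $\pi_\I\beta\phi$, using $\bar\phi|_{\mathcal{A}}=\phi$ and $\bar\beta|_{\mathcal{A}'}=\pi_\I\beta$; by uniqueness of the continuous extension they coincide, which is the naturality square in $\mathcal{A}$. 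Finally, a general morphism $(\phi,\psi)$ of $C^*A_G\times C^*SI_G$ factors as $(\phi,\id)\circ(\id,\psi)$, so the two squares compose to the full naturality square and finish the argument.

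I expect the only delicate points to be bookkeeping rather than substance: keeping the four corona-type algebras $\mathcal{C}_\I,\mathcal{C}_I,\mathcal{C}_{\I'},\mathcal{C}_{I'}$ and the two maps induced by $\tilde\psi$ straight, and — the genuine crux — invoking at each step the \emph{uniqueness} of the continuous $G$-equivariant extension of a $C^*$-bounded map off the dense subalgebra $\mathcal{A}$. It is precisely this uniqueness, together with the strictly commuting diagram of quotient maps at the multiplier level, that forces the two branches of each square to agree; nothing deeper than density and $C^*$-boundedness is needed.
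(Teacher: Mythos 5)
Your proposal is correct and follows essentially the same route as the paper: the paper's (one-line) proof likewise observes that $\Theta$ just extends Busby mappings to the $C^*$-closure and that naturality follows because morphisms in $C^*A_G\times C^*SI_G$ are $C^*$-bounded equivariant $*$-homomorphisms, which is exactly the density/uniqueness-of-extension argument you carry out in detail. Your version merely makes explicit the quotient maps $\pi_\I$, $q_\psi$, $\bar q_\psi$ and the two naturality squares that the paper leaves implicit.
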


\begin{proof} 
The mapping $\Theta^\mathcal{A}_\I$ merely extends Busby mappings to the object's $C^*$-closure, so $\Theta^\mathcal{A}_\I$ commutes with composition of morphisms in $C^*A_G\times C^*SI_G$ since they are just equivariant $C^*$-bounded $*$-homomorphisms. Thus $\Theta$ is a natural transformation.
\end{proof}

\section{Invertible extensions}

Just as in the case of a $C^*$-algebra one can relate invertibility in the $\mathcal{E}xt_G$-monoid and properties of the splitting. In this section we will study invertibility in $\mathcal{E}xt_G$-monoid in terms of Toeplitz operators.

The main result to be obtained in this section tells us that there is a direct link between algebraic properties in the $\mathcal{E}xt_G$-monoid and analytical properties of the extension. But this tells us nothing about how to construct the inverse or give explicit expressions. We will study this in the case of $G$ being the trivial group and for extensions admitting a $C^*$-bounded, completely positive splitting. Then these explicit constructions are possible in an ideal $\J_\I\supseteq \I$ such that $\I$ is the linear span of $\{a^*a:a\in \J_\I\}$. In this setting an explicit inverse can be given in $\mathcal{E}xt(\mathcal{A},\J_\I)$.

\begin{deef}
A $G$-equivariant extension which admits a splitting of the form $a\mapsto P\pi(a)P$, for a $G$-equivariant algebraic $\mathcal{A}-\I$-Kasparov module $(\pi,F)$ and $P=(F+1)/2$,  is called a $G$-equivariant Toeplitz extension.
\end{deef}

We will sometimes identify the Toeplitz extension with the pair $(P,\pi)$.

\begin{sats}
\label{psuminv}
An extension $[\mathcal{E}]\in \mathcal{E}xt_G(\mathcal{A},\I)$ is invertible if and only if $[\mathcal{E}]$ can be represented by a $G$-equivariant Toeplitz extension.
\end{sats}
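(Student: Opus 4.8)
The plan is to prove both implications by a direct computation at the level of Busby mappings, using the complementary compression as the candidate inverse and the stabilization isomorphism of Proposition \ref{mtstab} to pass between $\I$ and $M_2\otimes\I$. Throughout I write $P=(F+1)/2$ for the projection attached to a symmetry $F$ and $Q=1-P$, and I use two facts that are immediate from the Kasparov module axioms: $q_\I(P)$ is a $G$-invariant projection in $\Co_\I$ because $g.F-F\in\I$, and $[F,\pi(a)]\in\I$ is equivalent to $[P,\pi(a)]\in\I$, so the off-diagonal corners $P\pi(a)Q$ and $Q\pi(a)P$ lie in $\I$. In particular $a\mapsto q_\I(Q\pi(a)Q)$ is again a bounded equivariant $*$-homomorphism.

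For the implication that a Toeplitz extension is invertible, suppose $[\mathcal{E}]$ is represented by $(\pi,F)$, so $\beta(a)=q_\I(P\pi(a)P)$. I would take as candidate inverse the complementary compression $\beta'(a):=q_\I(Q\pi(a)Q)$, which is a bounded equivariant Busby mapping by the remarks above. To see $[\beta]+[\beta']=0$ I pass to $M_2\otimes\Co_\I$, lift $\beta\oplus\beta'$ to $\mathrm{diag}(P\pi(a)P,\,Q\pi(a)Q)$, and conjugate by the genuine, almost $G$-invariant unitary
\[
\mathcal{U}=\begin{pmatrix} P & Q\\ -Q & P\end{pmatrix}\in\mathcal{M}(M_2\otimes I),\qquad \mathcal{U}^*=\begin{pmatrix} P & -Q\\ Q & P\end{pmatrix}.
\]
A short calculation using $P^2=P$, $Q^2=Q$, $PQ=QP=0$ and $P+Q=1$ gives $\mathcal{U}\,\mathrm{diag}(P\pi(a)P,Q\pi(a)Q)\,\mathcal{U}^*\equiv\mathrm{diag}(\pi(a),0)\bmod M_2\otimes\I$. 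Since $\mathcal{U}$ is an almost $G$-invariant unitary, this exhibits $\beta\oplus\beta'$ as strongly equivalent to the trivial Busby mapping lifting to the genuine equivariant representation $a\mapsto\mathrm{diag}(\pi(a),0)$; hence $[\beta]+[\beta']=0$ and $[\mathcal{E}]$ is invertible.

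For the converse I would first unwind invertibility. If $[\beta]$ has inverse $[\gamma]$, then $\beta+\gamma$ is stably equivalent to $0$, and after absorbing the auxiliary trivial summands of the stable equivalence into $\gamma$ (which does not change its class) I may assume that $\mathrm{diag}(\beta,\gamma)$ is \emph{strongly} equivalent in $M_2\otimes\Co_\I$ to $q_\I\sigma$ for a genuine $C^*$-bounded $G$-equivariant representation $\sigma:\mathcal{A}\to\mathcal{M}(M_2\otimes I)$; concretely there is an almost invariant unitary $W$ with $\mathrm{diag}(\beta(a),\gamma(a))=q_\I(W^*\sigma(a)W)$. Now set $\pi:=\sigma$ and $\tilde P:=We_{11}W^*$ with $e_{11}=\mathrm{diag}(1,0)$. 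Then $\tilde P$ is a projection, almost $G$-invariant because $W$ is and $e_{11}$ is invariant, and the diagonality of $q_\I(W^*\sigma(a)W)$ forces $[e_{11},W^*\sigma(a)W]\in M_2\otimes\I$, whence $[\tilde P,\sigma(a)]=W[e_{11},W^*\sigma(a)W]W^*\in M_2\otimes\I$. Thus $(\sigma,2\tilde P-1)$ is a $G$-equivariant algebraic Kasparov module, and compressing the identity $\mathrm{diag}(\beta,\gamma)=q_\I(W^*\sigma W)$ by $e_{11}$ yields $\beta(a)=q_\I(W^*\tilde P\sigma(a)\tilde P W)$, i.e. $\beta$ is strongly equivalent (via $W$) to the Toeplitz mapping $a\mapsto q_\I(\tilde P\sigma(a)\tilde P)$. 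Finally I would transport this Toeplitz extension from $M_2\otimes\I$ back to $\I$ through $\lambda=Ad\,q_\I(V)$ of Proposition \ref{mtstab}, which conjugates $(\sigma,\tilde P)$ into the Kasparov module $(V\sigma(\cdot)V^*,\,V\tilde P V^*)$ over $\I$ and so represents $[\mathcal{E}]$ by a genuine $G$-equivariant Toeplitz extension.

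The main obstacle is the bookkeeping in the converse: from the mere existence of an inverse in the quotient monoid $E_G/D_G$ one must extract a single honest equivariant representation $\sigma$ that is \emph{strongly} equivalent to $\mathrm{diag}(\beta,\gamma)$, not merely weakly or stably equivalent. This requires folding the trivial summands of stable equivalence into $\gamma$ and verifying that the resulting unitary $W$ is genuinely almost $G$-invariant, so that the relations $g.\tilde P-\tilde P\in\I$ and $[\tilde P,\sigma(a)]\in\I$ hold on the nose; the commutator relation, which is exactly what makes $(\sigma,\tilde P)$ a Kasparov module, is the crux and comes precisely from the off-diagonal vanishing encoded in the strong equivalence.
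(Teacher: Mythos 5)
Your argument is correct and follows essentially the same route as the paper: the converse direction is exactly the paper's Lemma \ref{mansalemmaett} (extracting an honest representation $\sigma$ from strong-plus-stable equivalence and compressing by the conjugated projection $We_{11}W^*$), and the forward direction is the paper's complementary-compression argument, with your rotation unitary $\left(\begin{smallmatrix} P & Q\\ -Q & P\end{smallmatrix}\right)$ playing the role of the paper's symmetry $\left(\begin{smallmatrix} P & P'\\ P' & P\end{smallmatrix}\right)$. Both choices implement the same conjugation of $\mathrm{diag}(P\pi(a)P,Q\pi(a)Q)$ to $\mathrm{diag}(\pi(a),0)$ modulo $M_2\otimes\I$, so the difference is purely cosmetic.
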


For equivariant extensions of $C^*$-algebras this statement is proved in \cite{thom} (Lemma $3.2$) and the case $G$ trivial is well studied in \cite{jeto} and \cite{brbl}. Our proof of Theorem \ref{psuminv} is based upon the same ideas adjusted to our setting.

\begin{lem}
\label{mansalemmaett}
Every strong equivalence class of an invertible $G$-equivariant extension is stably equivalent to a $G$-equivariant Toeplitz extension.
\end{lem}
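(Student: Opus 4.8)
The plan is to unwind what invertibility provides and then realize $\beta$ as a quasicentral compression of a genuine $*$-homomorphism. Fix a Busby mapping $\beta\in B_G(\mathcal{A},\I)$ representing the given invertible class.

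\textbf{Step 1 (unwinding invertibility).} Since $[\beta]$ is invertible there is $\beta'$ with $[\beta]+[\beta']=0$, meaning $\beta\oplus\beta'$ is stably equivalent to a trivial mapping. Because $D_G(\mathcal{A},\I)$ is a subsemigroup and adding a trivial mapping to $\beta'$ leaves it an inverse of $[\beta]$, I would first absorb the stabilizing trivials into $\beta'$ so that, working with the ideal $M_2\otimes\I$ and viewing $\beta\oplus\beta'$ as a Busby mapping $\mathcal{A}\to\mathcal{C}_{M_2\otimes\I}=M_2\otimes\mathcal{C}_\I$, the map $\beta\oplus\beta'$ is \emph{strongly} equivalent to a trivial mapping. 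By definition of triviality this trivial mapping lifts to a $C^*$-bounded $G$-equivariant $*$-homomorphism $\pi:\mathcal{A}\to\mathcal{M}(M_2\otimes I)$, and the strong equivalence furnishes an almost invariant unitary $u\in U^a(M_2\otimes\I)$ with $\beta\oplus\beta'=q_{M_2\otimes\I}\bigl(u\,\pi(\cdot)\,u^*\bigr)$.

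\textbf{Step 2 (building the Kasparov module).} Let $e=\mathrm{diag}(1,0)\in\mathcal{M}(M_2\otimes I)$, which is genuinely $G$-invariant since $G$ acts trivially on $M_2$, and set $P:=u^*eu$ and $F:=2P-1$. I would then verify that $(\pi,F)$ is a $G$-equivariant algebraic $\mathcal{A}-(M_2\otimes\I)$-Kasparov module: $F$ is a symmetry; it is almost invariant because $e$ is invariant and $g.u-u\in M_2\otimes\I$; and $[F,\pi(a)]\in M_2\otimes\I$ because $q_{M_2\otimes\I}(u\pi(a)u^*)=\mathrm{diag}(\beta(a),\beta'(a))$ commutes \emph{exactly} with $e$, whence $[e,u\pi(a)u^*]\in M_2\otimes\I$ and $[P,\pi(a)]=u^*[e,u\pi(a)u^*]u\in M_2\otimes\I$.

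\textbf{Step 3 (identifying the Busby mapping).} The Toeplitz extension attached to $(\pi,F)$ has Busby mapping $a\mapsto q_{M_2\otimes\I}(P\pi(a)P)$. Using $q_{M_2\otimes\I}(u\pi(a)u^*)=\mathrm{diag}(\beta(a),\beta'(a))$ and $e\,\mathrm{diag}(\beta(a),\beta'(a))\,e=\mathrm{diag}(\beta(a),0)$, this computes to
\[
q_{M_2\otimes\I}\bigl(P\pi(a)P\bigr)=\mathrm{Ad}\bigl(q_{M_2\otimes\I}(u)^*\bigr)\bigl(\beta(a)\oplus 0\bigr).
\]
Since $q_{M_2\otimes\I}(u)$ is a $G$-invariant unitary, the right-hand side is strongly equivalent to $\beta\oplus 0$; and $\beta\oplus 0=\beta+0$ is stably equivalent to $\beta$ because the zero mapping is trivial. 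Finally I would transport the whole construction along the isomorphism $M_2\otimes\I\cong\I$ of Proposition \ref{mtstab}, which carries Kasparov modules to Kasparov modules, to obtain a genuine $G$-equivariant Toeplitz extension over $\I$ representing the class of $\beta$.

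The step I expect to be the main obstacle is Step 1: one must extract from the purely algebraic relation $[\beta]+[\beta']=0$ a \emph{genuinely} $G$-equivariant $*$-homomorphism lift $\pi$ together with an equivalence implemented by an almost invariant unitary, rather than a merely approximately equivariant map. Keeping $\pi$ honestly equivariant is exactly what allows the non-invariance to be parked inside $F=2u^*eu-1$, where the definition of a Kasparov module only requires $F$ to be almost invariant; this reformulation is what turns Steps 2 and 3 into a routine verification.
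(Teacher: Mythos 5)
Your proposal is correct and follows essentially the same route as the paper: unwind invertibility to get a lift $\pi$ of $U^*(\beta\oplus\beta')U$, set $P=U^*\mathrm{diag}(1,0)U$, observe that the off-diagonal corners of $U\pi(a)U^*$ lie in $M_2\otimes\I$ so that $(\pi,2P-1)$ is a Kasparov module, and identify $q(P\pi(\cdot)P)$ with $\beta\oplus 0$ up to strong equivalence. Your Step 1 is in fact slightly more careful than the paper, which compresses the absorption of the stabilizing trivial summands into the phrase ``by definition.''
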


\begin{proof}
Assume that $\mathcal{E}$ is a $G$-equivariant extension of $\mathcal{A}$ by $\I$ with equivariant Busby mapping $\beta_1:\mathcal{A}\to \Co_\I$ which is invertible in $\mathcal{E}xt_G(\mathcal{A},\I)$. By definition there is a mapping 
$\beta_2:\mathcal{A}\to \Co_\I$ and a $U\in U^a(M_2\otimes \I)$ such that 
\[U^*(\beta _1\oplus \beta_2)U :\mathcal{A}\to M_2\otimes \Co_\I\]
can be lifted to an equivariant $C^*$-bounded representation $\pi:\mathcal{A}\to M_2\otimes \mathcal{M}(I)$. Let $P\in M_2\otimes \mathcal{M}(I)$ denote the almost $G$-invariant projection $U^* \begin{pmatrix} 1&0\\0&0 \end{pmatrix}U$. Define
\[\beta'(a):=q_\I(P\pi(a)P), \quad \beta''(a):=q_\I((1-P)\pi(a)(1-P)).\]
For $a\in \mathcal{A}$, we have
\[\beta_1(a)=q_\I(UPU^*)(\beta _1(a)\oplus \beta_2(a))q_\I(UPU^*)=\]
\[=q_\I(U)q(P\pi(a)P)q_\I(U^*)=q_\I(U)\beta'(a)q_\I(U^*),\]
which implies that up to strong equivalence $\beta$ is the Busby mapping of the extension. By the same reasoning $\beta''$ is strongly equivalent $\beta_2$. 

Define $\tau'(a):=P\pi(a)P$ and $\tau''(a):=(1-P)\pi(a)(1-P)$. We express the representation $\pi':=Ad\; U^* \circ\pi$ as follows
\[\pi'(a)=\begin{pmatrix}
U\tau'(a)U^*& \pi_{12}(a)\\
\pi_{21}(a)& U\tau''(a)U^* \end{pmatrix},\]
Since $q_\I\pi'=\beta_1\oplus \beta_2$, it follows that $\pi_{12}(a),\pi_{21}(a) \in \I$. The calculation
\[[P,\pi(a)]=U^*\left[\begin{pmatrix} 1&0\\0&0 \end{pmatrix}, \pi'(a)\right]U=U^*\begin{pmatrix}0&\pi_{12}(a)\\
-\pi_{21}(a)& 0\end{pmatrix}U \in M_2\otimes \I,\]
is a consequence of that $M_2\otimes \I$ is an ideal in $M_2\otimes I$ and implies that $\tau$ defines a $G$-equivariant Toeplitz extension.
\end{proof}

\begin{proof}[Proof of Theorem \ref{psuminv}]
If $[\mathcal{E}]$ is invertible it is given by a Toeplitz extension by Lemma \ref{mansalemmaett}. Conversely assume that $\mathcal{E}$ is a $G$-equivariant Toeplitz extension $(\pi,P)$ of $\mathcal{A}$. We define $P':=1-P$, $P_2:=P\oplus P'$, $\tau(a):=P\pi(a)P$ and $\tau'(a):=P'\pi(a)P'$. Then the claim from which the theorem will follow is that the Busby mapping $q_\I\circ \tau'$ defines an inverse to $\mathcal{E}$. To prove this, we define the almost $G$-invariant symmetry
\[U:=\begin{pmatrix} P& P'\\
P'& P\end{pmatrix}.\]
This symmetry satisfies $UP_2U=1\oplus 0$. We make the observation that $(\pi\oplus\pi, P_2)$ and $(U\pi\oplus\pi U, P_2)$ defines the same extension because of Proposition \ref{spbu} and that the pair $(\pi,P)$ are $\I$-almost commuting.  Since 
\[\pi(a)\oplus 0= UP_2U(\pi(a)\oplus \pi(a))UP_2U\]
it follows that 
\[[q_\I\circ \tau]+[q_\I\circ \tau']=[q_\I\circ(P_2(\pi\oplus \pi) P_2)]= [q_\I\circ(UP_2U^2(\pi\oplus \pi) U^2P_2U)]=\]
\[=[q_\I\circ(UP_2U(\pi\oplus \pi) UP_2U)]=[q_\I\circ \pi\oplus 0]=0.\]
\end{proof}

Suppose that we are in the situation $G=\{e\}$. In this case we are able to calculate an inverse to extensions admitting positive splitting if we enlarge the ideal somewhat. This should be thought of as passing from $\ellL^n(H)$ to $\ellL^{2n}(H)$. First we need an abstract notion of this procedure.

\begin{prop} 
\label{sqrot}
Suppose that $\I$ is a $C^*$-stable $G$-ideal. The $*$-algebra
\[\J_\I:=l.s.\{x\in I:x^*x\in \I \quad \mbox{and}\quad xx^*\in \I\}.\]
defines a $C^*$-stable $G$-ideal $(\J_\I,I)\in C^*SI_G$. We will call $\J_\I$ the square root of $\I$. 
\end{prop}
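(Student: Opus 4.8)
The plan is to verify, in turn, that the generating set $S:=\{x\in I: x^*x\in\I \text{ and } xx^*\in\I\}$ is self-adjoint and multiplicatively closed (so that $\J_\I=\mathrm{l.s.}(S)$ is a $*$-subalgebra of $I$), that $\J_\I$ is a two-sided ideal in $\mathcal{M}(I)$, that it is $G$-invariant and dense in $I$, and finally to read off membership in $C^*SI_G$.

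The self-adjointness of $S$ is immediate, since $(x^*)^*(x^*)=xx^*\in\I$ and $(x^*)(x^*)^*=x^*x\in\I$. Multiplicative closure is the first place where the hypothesis that $\I$ is an ideal in $\mathcal{M}(I)$ (not merely a subalgebra) is used: for $x,y\in S$ one writes $(xy)^*(xy)=y^*(x^*x)y$ and $(xy)(xy)^*=x(yy^*)x^*$, both of which lie in $\I$ because $x^*x,\,yy^*\in\I$ while $x,y\in I\subseteq\mathcal{M}(I)$ may multiply them on either side. Hence $xy\in S$, and together with self-adjointness this makes $\J_\I$ a $*$-subalgebra of $I$.

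The step that looks troublesome is the ideal property: for $x\in S$ and a general $m\in\mathcal{M}(I)$ the element $mx$ need not itself lie in $S$, because $(mx)^*(mx)=x^*m^*mx$ cannot be forced into $\I$ from $x^*x\in\I$ alone (that would require a heredity property of $\I$ which is not available). I will sidestep this entirely by reducing to unitaries: if $u\in\mathcal{M}(I)$ is unitary then $(ux)^*(ux)=x^*(u^*u)x=x^*x\in\I$ and $(ux)(ux)^*=u(xx^*)u^*\in\I$, so $ux\in S$; symmetrically $(xu)^*(xu)=u^*(x^*x)u\in\I$ and $(xu)(xu)^*=xx^*\in\I$, so $xu\in S$. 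Since $\mathcal{M}(I)$ is unital, every $m\in\mathcal{M}(I)$ is a finite linear combination of unitaries, and therefore $mx$ and $xm$ are finite linear combinations of elements of $S$, i.e. lie in $\J_\I$. Thus $\J_\I$ is a two-sided ideal in $\mathcal{M}(I)$; the unitarity $u^*u=uu^*=1$ is exactly what annihilates the problematic cross term, and this unitary decomposition is the crux of the argument.

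It remains to collect the structural claims. The action being by $*$-automorphisms that preserve $\I$ gives $(g.x)^*(g.x)=g.(x^*x)\in\I$ and $(g.x)(g.x)^*=g.(xx^*)\in\I$, so $g.x\in S$ and $\J_\I$ is $G$-invariant. For density, every $a\in\I$ satisfies $a^*a,\,aa^*\in\I$ (as $\I$ is a $*$-algebra), whence $\I\subseteq S\subseteq\J_\I\subseteq I$; since $\I$ is dense in $I$, so is $\J_\I$. Finally the ambient $C^*$-algebra of the pair $(\J_\I,I)$ is the same $I$, which is separable, carries a continuous $G$-action, and is equivariantly stable by hypothesis, so $(\J_\I,I)$ is an object of $C^*A_G$ which is an ideal in $\mathcal{M}(I)$ with equivariantly stable closure, that is, $(\J_\I,I)\in C^*SI_G$.
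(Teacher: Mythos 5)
Your proof is correct, and at the decisive step it takes a different route from the paper's. The paper splits the generating set into $\J_\I^+=\{x\in I: x^*x\in\I\}$ and $\J_\I^-=\{x\in I:xx^*\in\I\}$, observes that $\J_\I^+$ absorbs right multiplication by $\mathcal{M}(I)$ (since $(xa)^*(xa)=a^*(x^*x)a\in\I$) and $\J_\I^-$ absorbs left multiplication, and then invokes $*$-invariance to get the remaining sides; but the adjoint interchanges $\J_\I^+$ and $\J_\I^-$ rather than preserving each, so the cross terms $x(aa^*)x^*$ and $x^*(a^*a)x$ are exactly the ones the paper's wording glosses over — the same obstruction you flag as the "troublesome step." Your reduction to unitaries ($u^*u=uu^*=1$ kills the cross term, and every element of the unital algebra $\mathcal{M}(I)$ is a finite linear combination of unitaries) resolves this cleanly at the harmless cost of landing in $\mathrm{l.s.}(S)$ rather than in $S$ itself, which is all the ideal property requires. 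The remaining points (self-adjointness and multiplicative closure of $S$, $G$-invariance, the inclusion $\I\subseteq S$ giving density, and the verification of the axioms of $C^*SI_G$ for the pair $(\J_\I,I)$) match the paper's argument, with the $G$-invariance and stability points spelled out where the paper leaves them implicit.
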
 

\begin{proof}
Define the two $*$-invariant subsets $\J_\I^+:=\{x\in I:x^*x\in \I \}$ and $\J_\I^-:=\{x\in I:xx^*\in \I\}$.  For $x\in \J_\I^+$ and $a\in \mathcal{M}(I)$, $(xa)^*xa \in \I$ so $xa\in \J_\I^+$. Since $\J_\I^+$ is $*$-invariant, $ax\in \J_\I^+$. Similarly, if $x\in \J_\I^+$ and $a\in \mathcal{M}(I)$ we have that $ax(ax)^*\in \I$ so $ax\in \J_\I^-$ and $xa\in \J_\I^-$. The $*$-algebra $\J_\I\equiv l.s.(\J_\I^+\cap \J_\I^-)$ so $\J_\I$ is an ideal in $\mathcal{M}(I)$. There is an embedding $\I\subseteq \J_\I$ because $\I$ is a $*$-algebra, so $\J_\I$ is dense in $I$. 
\end{proof}

\begin{sats}
\label{bodid}
Let  $\mathcal{E}$ be an extension of $\mathcal{A}$ by $\I$ admitting a $C^*$-bounded splitting $\kappa$ extending to a completely positive contraction $\kappa:A\to \mathcal{M}(I)$. If $i:\I\to \mathcal{J}_\I$ is the embedding of $\I$ into its square root, $i_*[q_\I\circ \kappa]$ is invertible in $\mathcal{E}xt(\mathcal{A},\mathcal{J}_\I)$.
\end{sats}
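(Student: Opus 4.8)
The plan is to realise $i_*[q_\I\circ\kappa]$ by a Toeplitz extension for the enlarged ideal $\J_\I$ and then quote Theorem \ref{psuminv}. Since $G=\{e\}$ the equivariance requirements are vacuous, so I only need an honest algebraic $\mathcal{A}$–$\J_\I$-Kasparov module. First I would dilate $\kappa$: extending $\kappa$ to a unital completely positive map on the unitization and applying the Stinespring/KSGNS construction produces a Hilbert $I$-module, a $*$-representation $\pi$ and an adjointable $V$ with $\kappa(a)=V^*\pi(a)V$ and $V^*V=1$. Using the equivariant stability of $I$ together with Kasparov's stabilisation theorem (cf.\ Proposition \ref{mtstab}) I would absorb the dilation module into $I$, so that $\pi:\mathcal{A}\to\mathcal{M}(I)$ is $C^*$-bounded and $V\in\mathcal{M}(I)$. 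Set $P:=VV^*$ and $F:=2P-1$; then $P$ is a projection, $F$ a symmetry, and $PV=V$, $V^*P=V^*$.

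The heart of the matter is to show that $[P,\pi(a)]\in\J_\I$ for every $a\in\mathcal{A}$. Writing $z_a:=P\pi(a)(1-P)$ and $z'_a:=(1-P)\pi(a)P$ one has $[P,\pi(a)]=z_a-z'_a$, and since $P(1-P)=0$ all mixed products vanish, so
\[
[P,\pi(a)]^*[P,\pi(a)]=z_a^*z_a+(z'_a)^*z'_a, \qquad [P,\pi(a)][P,\pi(a)]^*=z_az_a^*+z'_a(z'_a)^*.
\]
Because $\kappa$ splits the extension, the Stinespring defect $\kappa(ab)-\kappa(a)\kappa(b)=V^*\pi(a)(1-P)\pi(b)V$ lies in $\I$. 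Compressing by $V$ (using $PV=V$) this gives directly $z_az_a^*=V\bigl(\kappa(aa^*)-\kappa(a)\kappa(a^*)\bigr)V^*\in\I$ and $(z'_a)^*z'_a=V\bigl(\kappa(a^*a)-\kappa(a^*)\kappa(a)\bigr)V^*\in\I$. The two remaining summands are of the form $c^*c$ and $dd^*$ with $c=V^*\pi(a)(1-P)$, $d=(1-P)\pi(a)V$, where $cc^*=\kappa(aa^*)-\kappa(a)\kappa(a^*)\in\I$ and $d^*d=\kappa(a^*a)-\kappa(a^*)\kappa(a)\in\I$. Here I would invoke that the ideals under consideration are \emph{symmetric}, i.e.\ $c^*c\in\I$ whenever $cc^*\in\I$ (this holds for the symmetrically normed ideals $\ellL^p_H$ and the Dixmier ideals, as the singular values of $c$ and $c^*$ coincide). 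Consequently both $[P,\pi(a)]^*[P,\pi(a)]$ and $[P,\pi(a)][P,\pi(a)]^*$ lie in $\I$, and hence $[P,\pi(a)]\in\J_\I$ by the very definition of the square root. Thus $(\pi,F)$ is an algebraic $\mathcal{A}$–$\J_\I$-Kasparov module and $(\pi,P)$ a Toeplitz extension.

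The symmetry step is exactly the main obstacle, and it clarifies why one must pass to $\J_\I$. The off-diagonal $z_a$ is controlled by the Stinespring defect on only one side, and even after using symmetry one obtains only that $[P,\pi(a)]^*[P,\pi(a)]$ and $[P,\pi(a)][P,\pi(a)]^*$ belong to $\I$; the commutator $[P,\pi(a)]$ itself is a \emph{square root} of an element of $\I$ and need not lie in $\I$, only in $\J_\I$. This is the precise mechanism by which the completely positive splitting fails to produce an invertible class over $\I$ but does so over $\J_\I$.

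It remains to identify the class of $(\pi,P)$ with $i_*[q_\I\circ\kappa]$. Its Busby mapping is $a\mapsto q_{\J_\I}(P\pi(a)P)=q_{\J_\I}(V\kappa(a)V^*)=\mathrm{Ad}\bigl(q_{\J_\I}(V)\bigr)\circ(q_{\J_\I}\circ\kappa)(a)$, and since $q_{\J_\I}(V)$ is an isometry and $I$ is stable, conjugation by it leaves the stable equivalence class unchanged; hence $[(\pi,P)]=[q_{\J_\I}\circ\kappa]=i_*[q_\I\circ\kappa]$ by the standard argument used in \cite{thom} and \cite{jeto}. Theorem \ref{psuminv} then yields that $i_*[q_\I\circ\kappa]$ is invertible, with inverse represented by the complementary Toeplitz extension $a\mapsto q_{\J_\I}\bigl((1-P)\pi(a)(1-P)\bigr)$, exactly as in the proof of that theorem.
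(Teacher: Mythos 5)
Your proposal is correct and follows essentially the same route as the paper: a Stinespring dilation of $\kappa$, the identification of the off-diagonal corner of $[P,\pi(a)]$ with the multiplicative defect $\kappa(ab)-\kappa(a)\kappa(b)\in\I$ (Lemma \ref{satsabove} in the paper, stated there in the $2\times 2$-matrix form $P=1\oplus 0$ rather than your $P=VV^*$), and an appeal to Theorem \ref{psuminv}. The symmetry step $cc^*\in\I\Leftrightarrow c^*c\in\I$ that you single out is also used (implicitly) in the paper's passage from $\pi_{12}(a)\pi_{21}(a)\in\I$ to $|[P,\pi_\kappa(a)]|^2\in M_2\otimes\I$, so you have if anything made the argument more explicit at its one delicate point.
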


Before proving this we need to review the useful construction of the Stinespring representation. This is a standard method for operator algebras and was first introduced by Stinespring in \cite{stine}. 

\begin{sats}[Stinespring Representation Theorem]
\label{mansalemmatva}
Assume that $A$ is a separable $C^*$-algebra, $I$ is a stable $C^*$-algebra and that $\kappa:A\to \mathcal{M}(I)$ is a completely positive mapping such that $\|\kappa\|\leq 1$. Then there exists a $*$-homomorphism $\pi_\kappa:A\to M_2\otimes \mathcal{M}(I)$ of $A$ such that
\[\begin{pmatrix}\kappa (a)&0\\0&0 \end{pmatrix}=\begin{pmatrix} 1&0\\0&0 \end{pmatrix}\pi_\kappa(a)\begin{pmatrix} 1&0\\0&0 \end{pmatrix}.\]
\end{sats}

The $*$-homomorphism $\pi_\kappa$ is called a Stinespring representation of $\kappa$. For proof see \cite{jeto}.

\begin{lem}
\label{satsabove}
Assume that $\kappa:A\to \mathcal{M}(I)$ is a completely positive contraction. In the notation above
\[\{a\in A :\kappa(a^2)-\kappa(a)^2\in \I\}=\{a\in A:[P,\pi_\kappa(a)] \in\mathcal{J}_\I\},\]
where $P:=\begin{pmatrix} 1&0\\0&0 \end{pmatrix}$.
\end{lem}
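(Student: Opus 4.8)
The plan is to pass everything through the Stinespring dilation $\pi_\kappa$ and read it off from the $2\times 2$ block form with respect to $P=\begin{pmatrix}1&0\\0&0\end{pmatrix}$. The Stinespring identity forces the $(1,1)$-corner of $\pi_\kappa$ to be $\kappa$, so I write
\[\pi_\kappa(a)=\begin{pmatrix}\kappa(a)&b(a)\\ c(a)&d(a)\end{pmatrix}\]
for suitable $b(a),c(a),d(a)\in\mathcal{M}(I)$. Since $\pi_\kappa$ is a $*$-homomorphism, comparing the $(1,1)$-entries of $\pi_\kappa(a^2)=\pi_\kappa(a)^2$ gives the basic identity $\kappa(a^2)-\kappa(a)^2=b(a)c(a)$. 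At the same time
\[ [P,\pi_\kappa(a)]=\begin{pmatrix}0&b(a)\\ -c(a)&0\end{pmatrix}, \qquad [P,\pi_\kappa(a)]^2=-\begin{pmatrix}b(a)c(a)&0\\ 0&c(a)b(a)\end{pmatrix}. \]
Thus the left-hand membership condition is exactly $b(a)c(a)\in\I$, the right-hand one concerns $[P,\pi_\kappa(a)]$, and the two are tied together by the fact that $b(a)c(a)$ is minus the $(1,1)$-corner of the square of the commutator.

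For the inclusion of the right-hand set into the left (commutator in $\mathcal{J}_\I$ forces the defect into $\I$) I would use only that $\mathcal{J}_\I$ is a genuine square root: it is $*$-closed and satisfies $\mathcal{J}_\I\mathcal{J}_\I\subseteq\I$, both following from the description of $\mathcal{J}_\I$ in Proposition~\ref{sqrot} (every $x\in\mathcal{J}_\I$ has $x^*x,xx^*\in\I$, and polarisation upgrades this to products of distinct elements). Applying this in the amplified algebra $M_2\otimes\mathcal{M}(I)$, if $[P,\pi_\kappa(a)]\in\mathcal{J}_\I$ then its square lies in $M_2\otimes\I$; reading off the $(1,1)$-corner yields $b(a)c(a)=\kappa(a^2)-\kappa(a)^2\in\I$. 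This half works for every $a$.

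For the reverse inclusion I would first treat $a=a^*$, where $\pi_\kappa(a)$ is self-adjoint, so $c(a)=b(a)^*$ and the defect becomes $b(a)b(a)^*\in\I$. The $*$-symmetry of the ideal --- the same property $x^*x\in\I\Leftrightarrow xx^*\in\I$ that makes $\mathcal{J}_\I^{+}$ and $\mathcal{J}_\I^{-}$ $*$-invariant in Proposition~\ref{sqrot} --- then also gives $b(a)^*b(a)\in\I$, so $b(a)\in\mathcal{J}_\I$. Consequently $[P,\pi_\kappa(a)]^*[P,\pi_\kappa(a)]$ and $[P,\pi_\kappa(a)][P,\pi_\kappa(a)]^*$ are diagonal with entries $b(a)b(a)^*$ and $b(a)^*b(a)$, hence lie in $M_2\otimes\I$, which is precisely the statement that $[P,\pi_\kappa(a)]\in\mathcal{J}_\I$.

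The main obstacle I anticipate is twofold. First, the whole argument rests on $\mathcal{J}_\I$ behaving like a true square root ($\mathcal{J}_\I\mathcal{J}_\I\subseteq\I$ together with $x^*x\in\I\Leftrightarrow xx^*\in\I$); these are exactly the structural features of the operator ideals underlying Proposition~\ref{sqrot}, and I would make their use explicit rather than invoke them silently. Second, and more delicate, is promoting the forward implication from self-adjoint $a$ to arbitrary $a$: for non-self-adjoint $a$ the quantity $\kappa(a^2)-\kappa(a)^2=b(a)c(a)$ is a mixed product that no longer controls $b(a)$ and $c(a)$ separately, whereas $[P,\pi_\kappa(a)]\in\mathcal{J}_\I$ demands both $b(a),c(a)\in\mathcal{J}_\I$. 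I would handle this by exploiting that both sets are $*$-closed and reducing to the self-adjoint components of $a$, which is where the careful bookkeeping lies.
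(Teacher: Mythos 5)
Your proposal follows essentially the same route as the paper: write the Stinespring dilation in $2\times 2$ block form, extract the identity $\kappa(a^2)-\kappa(a)^2=\pi_{12}(a)\pi_{21}(a)$ from the $(1,1)$-corner of $\pi_\kappa(a)^2$, compute $[P,\pi_\kappa(a)]^2$ as the diagonal matrix with entries $-\pi_{12}(a)\pi_{21}(a)$ and $-\pi_{21}(a)\pi_{12}(a)$, and reduce to self-adjoint $a$ by polarization, using the defining property of $\J_\I$ to translate membership of the square in $\I$ into membership of the commutator in $\J_\I$. The only difference is cosmetic --- you make explicit the structural facts about $\J_\I$ (that $\J_\I\cdot\J_\I\subseteq\I$ and the $*$-symmetry $x^*x\in\I\Leftrightarrow xx^*\in\I$) that the paper uses silently --- so the argument is correct and matches the paper's proof.
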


\begin{proof}
We express the representation as follows
\[\pi(a)=\begin{pmatrix}
\kappa(a)& \pi_{12}(a)\\
\pi_{21}(a)& \pi_{22}(a) \end{pmatrix},\]
where $\pi_{12}(a)=P\pi(a)(1-P)$ and so on. This implies that $\pi_{12}(a)^*=\pi_{21}(a^*)$. Since $\pi$ is a representation
\begin{equation}
\label{ekv1}
\begin{pmatrix}
\kappa(ab)& *\\
*& * \end{pmatrix}=\pi(ab)=\pi(a)\pi(b)=\begin{pmatrix}
\kappa(a)\kappa(b)+\pi_{12}(a)\pi_{21}(b)& *\\
*& * \end{pmatrix}.
\end{equation}
So
\[\kappa(ab)-\kappa(a)\kappa(b)=\pi_{12}(a)\pi_{21}(b).\]

Thus $\kappa(a^2)-\kappa(a)^2\in \I$ if and only if $\pi_{12}(a)\pi_{21}(a)\in \I$.  After polarization we only need to show that this is equivalent to the statement $[P,\pi_\kappa(a)]\in \mathcal{J}_\I$ for self adjoint $a$. But
\[[P,\pi(a)]=\begin{pmatrix}0&\pi_{12}(a)\\
-\pi_{21}(a)& 0\end{pmatrix}\]
implies
\begin{equation}
\label{ekv2}
|[P,\pi(a)]|^2=-[P,\pi(a)]^2=\begin{pmatrix} \pi_{12}(a)\pi_{21}(a)&0 \\
0& \pi_{21}(a)\pi_{12}(a)\end{pmatrix} \in M_2\otimes \I
\end{equation}
It follows from \eqref{ekv2} that $\pi_{12}(a)\pi_{21}(a)\in \I$ if and only if $|[P,\pi_\kappa(a)]|^2\in \I$ if and only if $[P,\pi_\kappa(a)]\in \mathcal{J}_\I$. 
\end{proof}

This proves Theorem \ref{bodid} since this implies that $\kappa$ defines a Toeplitz extension of $\mathcal{A}$ by $\mathcal{J}_\I$ and by Theorem \ref{psuminv} the element $i_*[q_\I\circ\kappa]$ is invertible in $\mathcal{E}xt(\mathcal{A},\mathcal{J}_\I)$.\\

To see that the square root of a $C^*$-stable ideal is needed sometimes, consider the example of the Besov space $\mathcal{A}=\mathcal{B}^{1/p}_p$ on the circle $S^1$. This carries a representation $\pi:\mathcal{A}\to \Bo(L^2(S^1))$ by multiplication as functions. Let  $P$ be the Hardy projection. By \cite{pelle}, if $a\in L^\infty(S^1)$ it holds that $[P,\pi(a)]\in \ellL^p(L^2(S^1))$ if and only if $a\in \mathcal{A}$. Making a similar decomposition of $\pi$ as in the proof of Lemma \ref{satsabove} one can show that the completely positive mapping $\tau(a):=P\pi(a)P$ is a splitting of an extension of $\mathcal{A}$ by $\ellL^{p/2}$. Since $\mathcal{A}\equiv\{a\in L^\infty(S^1): [P,\pi(a)]\in \ellL^p(L^2(S^1)\}$ it follows that $[q_{\ellL^{p/2}}\circ \tau]\in \mathcal{E}xt(\mathcal{A},\ellL^{p/2})$ is not invertible by Theorem \ref{psuminv}. But if $i:\ellL^{p/2}\to \ellL^p$ denotes the inclusion mapping (which coincides with the mapping constructed in Proposition \ref{sqrot}) the element $i_*[q_{\ellL^{p/2}}\circ \tau]\in\mathcal{E}xt(\mathcal{A},\ellL^{p})$ is invertible by Theorem \ref{psuminv}.

\section{Example: Extensions of $C^\infty(M)$ by Schatten ideals}

Commutative $C^*$-algebras have many good properties such as nuclearity and concrete realizations in geometry. The geometric interpretations of extensions of commutative $C^*$-algebras over a manifold, such as Toeplitz operators and pseudodifferential operators, are motivating for extension theory and allows for very concrete smooth $*$-subalgebras to do calculations in. 

For example, the one dimensional case $M=\T$ can be handled in a fairly straightforward fashion by finding an invertible generator for $\mathcal{E}xt^{-1}(C^\infty (S^1),\ellL^p)$ for $p\geq 2$ precisely as is done for $C(S^1)$ in Chapter $7$ in \cite{doug}. To find a set of generators in the general setting will be difficult. But a more abstract approach together with a topological description of $K$-homology of  smooth manifolds shows that the $\Theta$-mapping in fact is a surjection for $\mathcal{A}=C^\infty(M)$ and $\I$ being a Schatten ideal or a Dixmier ideal.

For $p>n$ define $i^p:\ellL^{n+}\to \ellL^p$ to be the embedding of $C^*$-stable ideals induced by the embedding $\ellL^{n+}\to \ellL^p$ of operator ideals. 

\begin{sats}
\label{sist}
Let $p>n$. Assume that $M$ is a compact manifold of dimension $n$ and  $\mathcal{A}=C^\infty(M)$.  Then the mappings
\[\Theta_{\ellL^{n+}}^{\mathcal{A}}:\mathcal{E}xt(\mathcal{A},\ellL^{n+}) \to Ext(C(M),\Ko)=K_1(M) \quad \mbox{and}\]
\[\Theta_{\ellL^{p}}^{\mathcal{A}}:\mathcal{E}xt(\mathcal{A},\ellL^p)\to Ext(C(M),\Ko) \quad \mbox{are surjective.}\]
\end{sats}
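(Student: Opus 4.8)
The plan is to exhibit, for every class of $Ext(C(M),\Ko)=K_1(M)$, a concrete preimage under $\Theta^{\mathcal{A}}_{\ellL^{n+}}$ built from an elliptic operator, and then to deduce the $\ellL^p$-statement from the $\ellL^{n+}$-statement by naturality. First I recall that, by Brown--Douglas--Fillmore \cite{bdf}, $Ext(C(M),\Ko)$ is the odd analytic $K$-homology $K_1(M)$, and that every class of $K_1(M)$ is the class of the Fredholm module attached to a self-adjoint first-order elliptic pseudodifferential operator on $M$. This is exactly the topological description of $K$-homology alluded to in the remark preceding the theorem: via Poincar\'e duality $K_1(M)\cong K^{1}_c(T^{*}M)$ (the cotangent bundle being canonically almost complex, hence $Spin^c$), every class is the index class of an elliptic symbol, and every symbol is quantized by a classical $\Psi$DO on $M$. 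So I fix such a $D$ acting on $L^2(M,S)$ for a Hermitian bundle $S$, let $\pi:C(M)\to\Bo(L^2(M,S))$ be the multiplication representation, and set $F:=D(1+D^2)^{-1/2}$, a classical $\Psi$DO of order $0$ whose principal symbol is the fibrewise phase of $\sigma(D)$.

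The analytic heart of the argument is a single summability estimate. For $a\in C^\infty(M)$ the operator $\pi(a)$ is a $\Psi$DO of order $0$ with scalar principal symbol $a(x)\cdot\mathrm{Id}_S$; since a scalar commutes with the (endomorphism-valued) principal symbol of $F$, the order-$0$ term of the symbol of $[F,\pi(a)]$ vanishes, so $[F,\pi(a)]$ is a classical $\Psi$DO of order $-1$ on the $n$-manifold $M$. By the standard Weyl-type spectral asymptotics for such operators (see \cite{horm}), its singular values satisfy $\mu_k=O(k^{-1/n})$, whence $[F,\pi(a)]\in\ellL^{n+}(L^2(M,S))$, and a fortiori $[F,\pi(a)]\in\ellL^p$ for every $p>n$ --- precisely the range in the statement, since $\sum_k k^{-p/n}<\infty$ iff $p>n$. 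Thus $(\pi,F)$ restricts on $\mathcal{A}=C^\infty(M)$ to a $C^*$-bounded algebraic $\mathcal{A}-\ellL^{n+}$-Kasparov module, so by \eqref{t} it defines a Toeplitz extension $[\mathcal{E}]\in\mathcal{E}xt(\mathcal{A},\ellL^{n+})$ (in particular invertible, by Theorem \ref{psuminv}).

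It then remains to identify the image of this class. By construction $\Theta^{\mathcal{A}}_{\ellL^{n+}}$ merely extends the Busby mapping to the $C^*$-closure, and the $C^*$-closure of $\ellL^{n+}$ is $\Ko$; the resulting extension of $C(M)$ by $\Ko$ is exactly the Busby extension of the Fredholm module $(\pi,F)$, i.e. the $K_1(M)$-class we started from. This proves surjectivity of $\Theta^{\mathcal{A}}_{\ellL^{n+}}$. For the Schatten case, I invoke naturality of $\Theta$ in the ideal variable: the inclusion $i^p:\ellL^{n+}\to\ellL^p$ induces the identity $\Ko\to\Ko$ on $C^*$-closures, so the naturality square gives $\Theta^{\mathcal{A}}_{\ellL^{p}}\circ i^{p}_{*}=\Theta^{\mathcal{A}}_{\ellL^{n+}}$, and surjectivity of the right-hand side forces surjectivity of $\Theta^{\mathcal{A}}_{\ellL^{p}}$.

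The step I expect to be the main obstacle is matching the summability exponent to the dimension, i.e. guaranteeing representatives whose commutators land in $\ellL^{n+}$ with $n=\dim M$ \emph{exactly}. Using elliptic operators on $M$ itself keeps the underlying manifold $n$-dimensional, so order $-1$ operators sit in $\ellL^{n+}$; had I instead used a Baum--Douglas geometric cycle $(W,E,f)$ with $\dim W>n$, the commutators would only lie in the strictly larger ideal $\ellL^{(\dim W)+}$, which is not contained in $\ellL^{n+}$. The delicate point is therefore to use the topological model of $K_1(M)$ in the dimension-correct form --- elliptic $\Psi$DOs on $M$ via Poincar\'e duality --- together with the verification that the scalar principal symbol of $\pi(a)$ commutes with that of $F$, so that the commutator genuinely drops to order $-1$.
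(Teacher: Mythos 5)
Your proposal is correct and follows essentially the same route as the paper: represent a $K_1(M)$-class by a Fredholm module $(\pi,F)$ with $F$ a zeroth-order pseudodifferential operator and $\pi$ the multiplication representation, observe that $[F,\pi(a)]$ has order $-1$ for $a\in C^\infty(M)$ and hence lies in $\ellL^{n+}\subseteq\ellL^p$ by Weyl asymptotics, so that $(\pi,F)$ is an algebraic $\mathcal{A}-\ellL^{n+}$-Kasparov module mapping onto the given class. Your extra care about the scalar principal symbol, the explicit singular value estimate, and the naturality square $\Theta^{\mathcal{A}}_{\ellL^{p}}\circ i^{p}_{*}=\Theta^{\mathcal{A}}_{\ellL^{n+}}$ are welcome elaborations of steps the paper leaves implicit, but they do not change the argument.
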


\begin{proof}
Using the definition of topological $K$-homology, see \cite{baumdo}, one sees that a class in  $K_1^{top}(M)\cong K^1(C(M))\cong Ext(C(M),\Ko)$ can be represented as the Fredholm module associated to a $0$:th order pseudodifferential operator $F$ over $M$ and the representation $\pi$ being pointwise multiplication of functions on $L^2(M,E)$ for some vector bundle $E$. Since $F$ is of order $0$ the commutator $[F,\pi(a)]$ is of order $-1$ for $a\in \mathcal{A}$. Thus $[F,\pi(a)]\in \ellL^{n+}(L^2(M,E))$ so $(F,\pi)$ is an $\mathcal{A}-\ellL^{n+}$-Kasparov module. Therefore $\mathcal{E}xt(\mathcal{A},\ellL^{n+}) \to Ext(C(M),\Ko)$ is surjective. A similar argument to the above one implies that $\Theta_{\ellL^{p}}^{\mathcal{A}}:\mathcal{E}xt(\mathcal{A},\ellL^{p}) \to Ext(C(M),\Ko)$ is surjective.
\end{proof}

\section{Deformations of Toeplitz extensions}

To end this paper we will look at a certain part of the set $\Theta^{-1}[(P,\pi)]$ for a Toeplitz extension $(P,\pi)$. The part of $\Theta^{-1}[(P,\pi)]$ we will study are linear perturbations of the projection $P$.  We will give an example of a smooth family of this type of linear deformations which gives a family of extensions $(x_\epsilon)_{\epsilon\in(1/2p,2/p)}\subseteq \Eg xt(C^\infty(S^1),\ellL^p)$ such that the the endpoints are non-equivalent. This example shows that $\Eg xt$ is not a homotopy invariant but carries more analytic information than similar bivariant theories.

If $(P,\pi)$ defines an $\I$-summable Toeplitz extension we say that $x\in \Eg xt(\mathcal{A},\I)$ is a linear deformation of $(P,\pi)$ by $T\in PIP$ if $x$ can be represented by an extension with a splitting of the form $\tau_T:a\mapsto (P+T)\pi(a)(P+T)$. Observe that $T\in PIP\subseteq I$ implies that $\Theta(P,\pi)=\Theta(x)$. For $a,b\in \mathcal{A}$ we have that 
\begin{align*}
\tau_T(ab)&-\tau_T(a)\tau_T(b)=\\
=&\;(P+T)\pi(ab)(P+T)-(P+T)\pi(a)(P+T)^2\pi(b)(P+T)=\\
=&\;\pi(ab)(P+T)^2(P-(P+T)^2)+[P+T,\pi(ab)](P+T)+\\
&+(P+T)\pi(a)[\pi(b),(P+T)^2](P+T)+\\
&+[\pi(ab),(P+T)](P+T)^3,
\end{align*}
so a sufficient condition for the operator $T$ to define a linear deformation is that $T^*-T,T^2+2T\in \I$ and $[T,\pi(a)]\in \I$ for all $a\in \mathcal{A}$.

The main example of a linear deformation is when one considers different representatives of Toeplitz extensions via a pseudo-differential operator on a manifold. Assume that $D$ is a self-adjoint, elliptic pseudo-differential operator on a smooth, compact manifold $M$ without boundary and let us take $P$ as the spectral projection onto the positive spectrum of $D$. The operator $P$ is a pseudo-differential operator of order $0$ so $[P,a]\in \ellL^p(L^2(M))$ for any $a\in C^\infty(M)$ and any $p>n$. Therefore the linear mapping $\tau(a):= PaP$ defines an $\ellL^p$-summable Toeplitz extension of $C^\infty(M)$. Let us take one more self-adjoint, elliptic pseudo-differential operator $K$ of order $\epsilon>n/2p$ and consider the order $-\epsilon$ operator 
\[T=P(K(1+K^2)^{-1/2}-1)P.\] 
The operator $T$ satisfies the identity 
\[T^2+2T=(T+P)^2-P=-P(1+K^2)^{-1}P.\]
So the operator $T$ satisfies $T^2+2T\in \ellL^p$ since we choose $K$ to have order bigger than $n/2p$. While $T$ is of order $-\epsilon$, $[T,\pi(a)]\in \ellL^p(L^2(M))$ and $T$ is self-adjoint since $K$ is self-adjoint. Therefore the linear mapping $\tau_T(a):=(P+T)a(P+T)$ defines an extension which is a linear deformation of $\tau$. 

The model case of the above setting is $K=D$. In this case the operator $P+T$ is given by $PD(1+D^2)^{-1/2}P$. Up to a finite rank operator, we have that $P=\frac{1}{2}(D|D|^{-1}+1)$ where the compact operator $|D|^{-1}$ can be defined as the inverse of $\sqrt{D^*D}$ on the range of $D^*D$ and defined to be $0$ on the finite-dimensional space $\ker(D^*D)$. Define the order $0$ pseudo-differential operator
\[\tilde{P}_D:=\frac{1}{2}(D(1+D^2)^{-1/2}+1).\]
Since $t/|t|-t(1+t^2)^{-1/2}=\mathcal{O}(t^{-2})$ as $t\to \infty$ and the order of $D$ is larger than $n/2p$ we have that 
\[PD(1+D^2)^{-1/2}P-\tilde{P}_D\in \ellL^p(L^2(M)).\]
Therefore the linear deformation of $\tau$ by $P(D(1+D^2)^{-1/2}-1)P$ coincides in $\Eg xt(C^\infty(M),\ellL^p)$ with the extension defined by the linear mapping $a\mapsto \tilde{P}_Da\tilde{P}_D$.

In general, we can not say more of $T$ than $T\in \ellL^{n/\epsilon}$ since the pseudo-differential operator $K(1+K^2)^{-1/2}-1$ is of order $-\epsilon$. As a consequence, if $\epsilon<n/p$ one can not expect that the mappings $q_{\ellL^p}\circ \tau$ and $q_{\ellL^p}\circ \tau_T$ coincide. We will by an example show that the two mappings may even lie in different strong equivalence classes. 

\begin{lem}
\label{uniteqlem}
Let $P$ be the Hardy projection on $S^1$ and assume that $T\in \Ko(H^2(S^1))$ is defined as $Tz^k:=\lambda_kz^k$ for some positive sequence $(\lambda_k)_{k\in\N}$ converging to $0$. If $a\in C^\infty(S^1)$ is given by $a(z):=z$ then for any $p\geq 1$ and any unitary $U\in \Bo(H^2(S^1))$ we have that 
\[\|U^*PaPU-(P+T)a(P+T)\|_{\ellL^p(H^2(S^1))}\geq \|T\|_{\ellL^p(H^2(S^1))}.\]
\end{lem}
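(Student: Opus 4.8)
The plan is to reduce the inequality to a concrete statement about two operators on $H^2(S^1)$ and then extract information from the first subdiagonal of their difference. Write $e_k := z^k$ for the standard orthonormal basis of $H^2(S^1)$, let $S$ be the unilateral shift $Se_k = e_{k+1}$, and note that on $H^2(S^1)$ the Hardy projection $P$ acts as the identity while multiplication by $a(z)=z$ restricts to $S$. Hence $PaP = S$, and since $Te_k = \lambda_k e_k$ one computes $(P+T)a(P+T)e_k = (1+\lambda_k)(1+\lambda_{k+1})e_{k+1}$, so $W := (P+T)a(P+T)$ is the weighted shift with weights $w_k := (1+\lambda_k)(1+\lambda_{k+1})$. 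Setting $V := U^*SU$, the operator inside the norm is $X := V - W$, and since $S^*S = 1$ we have $V^*V = U^*S^*SU = 1$, so $V$ is an isometry; in particular $\|Ve_k\| = 1$ for every $k$.

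Second, I would establish the elementary pinching bound that the $\ell^p$-norm of the first subdiagonal of any $X \in \ellL^p$ is dominated by $\|X\|_{\ellL^p}$. Observe that $\langle e_{k+1}, Xe_k\rangle = \langle e_k, S^*Xe_k\rangle$, so the subdiagonal entries of $X$ are exactly the diagonal entries of $S^*X$. Both left multiplication by the contraction $S^*$ and the diagonal pinching $Y \mapsto \mathrm{diag}(Y)$ are contractions in every Schatten norm $\ellL^p$, $p \geq 1$ (the latter being the conditional expectation onto the diagonal masa). Composing the two gives $\left(\sum_{k\geq 0}|\langle e_{k+1}, Xe_k\rangle|^p\right)^{1/p} = \|\mathrm{diag}(S^*X)\|_{\ellL^p} \leq \|X\|_{\ellL^p}$.

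Finally, I would bound the subdiagonal entries of $X = V - W$ from below. Since $W$ is a weighted shift, $\langle e_{k+1}, We_k\rangle = w_k$, while $|\langle e_{k+1}, Ve_k\rangle| \leq \|Ve_k\| = 1$ because $V$ is an isometry. The reverse triangle inequality then yields $|\langle e_{k+1}, Xe_k\rangle| \geq w_k - |\langle e_{k+1}, Ve_k\rangle| \geq w_k - 1$, and $w_k - 1 = \lambda_k + \lambda_{k+1} + \lambda_k\lambda_{k+1} \geq \lambda_k$. Feeding this into the pinching bound gives $\|X\|_{\ellL^p} \geq \left(\sum_k \lambda_k^p\right)^{1/p} = \|T\|_{\ellL^p}$, which is the claim. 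The one point requiring care — and essentially the only idea in the argument — is the decision to isolate the first subdiagonal: it is exactly where the weighted shift $W$ concentrates its mass ($w_k \geq 1$), while the competing isometry $V$ can contribute at most $1$ in modulus to each such entry, so the reverse triangle inequality cleanly leaves the deformation $\lambda_k$ behind. Everything else is the standard fact that pinching and one-sided multiplication by a contraction do not increase Schatten norms.
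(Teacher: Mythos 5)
Your proof is correct, and its decisive step differs from the one in the paper. Both arguments begin identically, computing $(P+T)a(P+T)e_k=(1+\lambda_k)(1+\lambda_{k+1})e_{k+1}$, but they diverge in how the pointwise information is converted into a Schatten-norm lower bound. The paper expands $L^*L$ for $L=U^*PaPU-(P+T)a(P+T)$ and shows $\|Le_k\|^2=\langle L^*Le_k,e_k\rangle\geq\lambda_k^2$, then asserts that after reordering the singular values satisfy $\mu_k(L)\geq\|Le_k\|$; as stated this comparison between singular values and the norms $\|Le_k\|$ along an orthonormal basis is not valid for a general operator (a rank-one operator spread over two basis vectors already violates it for $p<2$), so the paper's final step is the weak point of its proof. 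You instead isolate the first subdiagonal entries $\langle e_{k+1},Xe_k\rangle$, bound them below by $w_k-1\geq\lambda_k$ via the reverse triangle inequality against the isometry $V=U^*SU$, and then invoke the genuinely correct principle that extracting a (shifted) diagonal — left multiplication by the contraction $S^*$ followed by the conditional expectation onto the diagonal masa — is a contraction on every $\ellL^p$, $p\geq1$. This is exactly the majorization-type fact needed to pass from matrix entries to $\|X\|_{\ellL^p}$, so your route not only reaches the same conclusion but repairs the gap in the paper's argument; the only cost is that you must know (or prove) that the diagonal pinching is a Schatten-norm contraction, which is standard by interpolation between $p=1$ and $p=\infty$ or by averaging over diagonal unitaries.
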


\begin{proof}
We will use the notation $e_k(z):=z^k$ for $k\geq 0$ and $f_k:=Ue_k$. Our first observation is that 
\begin{equation}
\label{prpcalc}
(P+T)a(P+T)e_k=(1+\lambda_{k+1}+\lambda_k+\lambda_k\lambda_{k+1})e_{k+1}.
\end{equation}
If we set $L=U^*PaPU-(P+T)a(P+T)$ we have that 
\[L^*L=S_1+S_2-S_3-S_4\]
where 
\begin{align*}
S_1&:=U^*Pa^*PaPU, \\ 
S_2&:=(P+T)a^*(P+T)^2a(P+T),\\
S_3&:=(P+T)a^*(P+T)U^*PaPU \;\;\;\mbox{and}\\ 
S_4&:=U^*Pa^*PU(P+T)a(P+T).
\end{align*}
Using \eqref{prpcalc} we obtaing the following equalities:
\begin{align*}
\langle S_1e_k,e_k\rangle=\|Paf_k\|^2&=1,\\
\langle S_2e_k,e_k\rangle=\|(P+T)a(P+T)e_k\|^2&=(1+\lambda_{k+1}+\lambda_k+\lambda_k\lambda_{k+1})^2,\\
\langle S_3e_k,e_k\rangle=\overline{\langle S_3e_k,e_k\rangle}&=(1+\lambda_{k+1}+\lambda_k+\lambda_k\lambda_{k+1})\langle af_k,f_{k+1}\rangle.
\end{align*}
Using these calculations the fact that $\lambda_k,\lambda_{k+1}\geq 0$ together with the elementary estimate $|\langle af_k,f_{k+1}\rangle|\leq 1$ implies that
\begin{align*}
\langle L^*Le_k,e_k\rangle=&\;1+(1+\lambda_{k+1}+\lambda_k+\lambda_k\lambda_{k+1})^2-\\
&-2(1+\lambda_{k+1}+\lambda_k+\lambda_k\lambda_{k+1})\Re\langle af_k,f_{k+1}\rangle=\\
=&\;1-|\langle af_k,f_{k+1}\rangle|^2+\\
&+|1-\langle af_k,f_{k+1}\rangle+\lambda_{k+1}+\lambda_k+\lambda_k\lambda_{k+1}|^2\geq\\
\geq&\; (\lambda_{k+1}+\lambda_k+\lambda_k\lambda_{k+1})^2\geq |\lambda_k|^2.
\end{align*}
After reordering the sequence $\lambda_k$ into a decreasing sequence, we have that the singular values $(\mu_k(L))_{k\in \N}$ satisfies that $\mu_k(L)\geq \|Le_k\|\geq |\lambda_k|$, so by Lidskii's theorem
\[\|U^*PaPU-(P+T)a(P+T)\|_{\ellL^p(H^2(S^1))}^p=\sum_{k\in \N}\mu_k(L)^p\geq \sum_{k\in\N}  |\lambda_k|^p.\]
\end{proof}

\begin{prop}
For any $p>1$ there is a smooth family $(T_\epsilon)_{\epsilon\in(1/2p,2/p)}\subseteq \ellL^{2p}(H^2(S^1))$ such that the linear deformations of the Toeplitz extension on the Hardy space by $T_\epsilon$ defines a family $(x_\epsilon)_{\epsilon\in(1/2p,2/p)}\subseteq \Eg xt(C^\infty(S^1),\ellL^p)$ where $x_\epsilon\neq x_{\epsilon+1/p}$ for $\epsilon\in (1/2p,1/p)$.
\end{prop}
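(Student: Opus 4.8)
The plan is to construct an explicit family $T_\epsilon$ of diagonal operators on the Hardy space, verify that each defines a genuine linear deformation (i.e.\ satisfies the sufficient conditions $T_\epsilon^* - T_\epsilon, T_\epsilon^2 + 2T_\epsilon \in \ellL^p$ and $[T_\epsilon, \pi(a)] \in \ellL^p$), and then use Lemma \ref{uniteqlem} to separate $x_\epsilon$ from $x_{\epsilon+1/p}$ in $\Eg xt(C^\infty(S^1), \ellL^p)$. Concretely, I would set $T_\epsilon e_k := \lambda_k(\epsilon) e_k$ for a positive decreasing null sequence, the natural choice being $\lambda_k(\epsilon) := (k+1)^{-\epsilon}$ (or $k^{-\epsilon}$ for $k \geq 1$, suitably adjusted at $k=0$), so that $T_\epsilon$ is self-adjoint and the dependence on $\epsilon$ is manifestly smooth in the appropriate operator-ideal topology. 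The Schatten membership is governed by $\sum_k (k+1)^{-\epsilon r} < \infty \iff \epsilon r > 1$: for $\epsilon \in (1/2p, 2/p)$ one has $2p\epsilon > 1$, so $T_\epsilon \in \ellL^{2p}(H^2(S^1))$, while $T_\epsilon^2 + 2T_\epsilon$ behaves like $2T_\epsilon$ to leading order, hence lies in $\ellL^p$ exactly when $p\epsilon > 1$, i.e.\ when $\epsilon > 1/p$ --- this is the regime where the deformation closes up as an honest $\ellL^p$-extension.

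\textbf{The order of steps.} First I would record that $[T_\epsilon, \pi(z)]$ is a weighted shift with entries $\lambda_{k+1}(\epsilon) - \lambda_k(\epsilon) = O(k^{-\epsilon-1})$, which is trace class (indeed in every $\ellL^p$) since $\epsilon + 1 > 1$; since $z$ generates $C^\infty(S^1)$ as a $*$-algebra and the commutator ideal condition passes to products and adjoints, this gives $[T_\epsilon, \pi(a)] \in \ellL^p$ for all $a \in \mathcal{A}$. Combined with self-adjointness and $T_\epsilon^2 + 2T_\epsilon \in \ellL^p$, the sufficient condition stated before the lemma shows each $\tau_{T_\epsilon}$ defines a class $x_\epsilon \in \Eg xt(C^\infty(S^1), \ellL^p)$. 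Second, smoothness of $\epsilon \mapsto T_\epsilon$ into $\ellL^{2p}$ follows by differentiating under the sum: $\partial_\epsilon \lambda_k = -\log(k+1)\,(k+1)^{-\epsilon}$, and the logarithmic factor is absorbed by any margin in the exponent, so all derivatives converge in $\ellL^{2p}$-norm on the open interval.

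\textbf{The separation argument} is where Lemma \ref{uniteqlem} does the work. Two classes $x_\epsilon$ and $x_{\epsilon'}$ coincide in $\Eg xt$ only if, after allowing stable and strong equivalence, the splittings agree up to conjugation by an almost-invariant unitary --- and Lemma \ref{uniteqlem} furnishes the lower bound $\|U^* PaPU - (P+T)a(P+T)\|_{\ellL^p} \geq \|T\|_{\ellL^p}$ for $a(z)=z$ and \emph{any} unitary $U$. The point is to compare $x_{\epsilon + 1/p}$ against the representative $U^* PaP U$ coming from a strong equivalence: taking the difference $T := T_{\epsilon+1/p}$ in the role of the deforming operator, one needs $T_{\epsilon+1/p} \notin \ellL^p$, i.e.\ $\sum_k (k+1)^{-(\epsilon+1/p)p} = \sum_k (k+1)^{-(p\epsilon + 1)} = \infty$, which holds precisely when $p\epsilon \leq 0$ --- so here I must be more careful and compare the two deformations against each other rather than one against the undeformed Toeplitz extension. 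The correct comparison is to show that the \emph{relative} operator $T_{\epsilon} - T_{\epsilon+1/p}$ (or the relevant diagonal difference entering $L$) fails to be $\ellL^p$ for $\epsilon \in (1/2p, 1/p)$, where $(\epsilon+1/p) \cdot p = p\epsilon + 1 > 1$ keeps $x_{\epsilon+1/p}$ well-defined while $\epsilon \cdot p \leq 1$ forces the diagonal discrepancy out of $\ellL^p$; Lidskii's theorem then gives $\|L\|_{\ellL^p}^p \geq \sum_k |\text{(diagonal entry)}|^p = \infty$, so no strong equivalence can exist and $x_\epsilon \neq x_{\epsilon+1/p}$.

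\textbf{The main obstacle} I anticipate is bookkeeping the stable-equivalence freedom correctly: $\Eg xt$-equality permits adding trivial Busby mappings and conjugating by almost-invariant unitaries on a stabilized module, whereas Lemma \ref{uniteqlem} is stated for a single unitary $U$ on $H^2(S^1)$. Reducing the full stable-equivalence relation to the hypothesis of the lemma --- showing that any equivalence between these two Toeplitz deformations can be normalized so that the lemma's lower bound applies to the relevant block --- is the delicate point, and it is exactly the place where the diagonal/weighted-shift structure of $T_\epsilon$ must be exploited to rule out cancellation against a trivial summand.
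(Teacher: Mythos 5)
Your construction breaks down before the separation argument starts: with $\lambda_k(\epsilon)=(k+1)^{-\epsilon}$ the map $\tau_{T_\epsilon}$ is \emph{not} a splitting of an extension of $C^\infty(S^1)$ by $\ellL^p$ for $\epsilon\in(1/2p,1/p]$, so the classes $x_\epsilon$ are undefined on exactly the half of the interval where the proposition asserts $x_\epsilon\neq x_{\epsilon+1/p}$. You notice that $T_\epsilon^2+2T_\epsilon\in\ellL^p$ only when $\epsilon>1/p$ and call this ``the regime where the deformation closes up,'' but this is not merely a failure of the stated \emph{sufficient} condition: testing multiplicativity on the unit gives $\tau_T(1)-\tau_T(1)^2=-(R+R^2)$ with $R=2T+T^2$ diagonal with entries tending to $0$, so $q_{\ellL^p}\circ\tau_T$ is a Busby map only if $R\in\ellL^p$, and for a positive diagonal $T$ this forces $T\in\ellL^p$ (since $\lambda(\lambda+2)\geq 2\lambda$ for $\lambda\geq 0$). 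Hence for your family either $x_\epsilon$ does not exist ($\epsilon\leq 1/p$) or $T_\epsilon\in\ellL^p$ and $q_{\ellL^p}\circ\tau_{T_\epsilon}=q_{\ellL^p}\circ\tau$ identically ($\epsilon>1/p$); in neither case is there anything to separate. No power-law rescaling repairs this: $|\lambda(\lambda+2)|\geq|\lambda|$ for $|\lambda|\leq 1$ means $T^2+2T$ is never strictly more summable than a diagonal $T$ whose eigenvalues tend to zero. This is the genuine obstruction your plan must confront, and it is why the paper takes the very particular sequence $\lambda_{k,\epsilon}=1-|k|^\epsilon(1+|k|^{2\epsilon})^{-1/2}$ coming from $K=|\rd/\rd\theta|^{\epsilon}$, for which the identity $T^2+2T=(P+T)^2-P=-P(1+K^2)^{-1}P$ is what is used to control $T^2+2T$; whether that choice actually achieves the required gap in summability between $T_\epsilon$ and $T_\epsilon^2+2T_\epsilon$ is precisely the delicate point, and your proposal does not engage with it.

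On the separation step you also take a harder route than the paper and do not complete it. The paper never compares two deformations against each other: it observes that for $\epsilon+1/p\in(1/p,2/p)$ the deforming operator is claimed to lie in $\ellL^p$, so $x_{\epsilon+1/p}$ \emph{is} the class of the undeformed Toeplitz extension, and then Lemma \ref{uniteqlem} applied with $T=T_\epsilon$ (arranged to lie outside $\ellL^p$ for $\epsilon<1/p$) excludes strong equivalence of $\tau_{T_\epsilon}$ with any unitary conjugate of $\tau$. Your proposed comparison of $T_\epsilon-T_{\epsilon+1/p}$ is not covered by the lemma as stated and is only gestured at. The stable-equivalence freedom you flag at the end is a legitimate concern that the lemma (and the paper's proof) does not explicitly dispose of, but it is downstream of the more basic defect: as written, your family does not produce the objects the proposition is about.
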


If we would replace the $\Eg xt$-invariant by for instance $kk$-theory, see more in \cite{cumero}, one would not be able to separate the elements $x_\epsilon$ and $x_{\epsilon+1/p}$ since the smooth family $(T_t)_{t\in[\epsilon,\epsilon+1/p]}$ can be used to construct a homotopy between the classification mappings of the extensions $x_\epsilon$ and $x_{\epsilon+1/p}$.

\begin{proof}
Let us start by defining the smooth family $(T_\epsilon)_{\epsilon\in(1/2p,2/p)}\subseteq \ellL^{2p}(H^2(S^1))$. We define $T_\epsilon$ for each $\epsilon\in (1/2p,2/p)$ in the same way as in Lemma \ref{uniteqlem} from the sequence
\[\lambda_{k,\epsilon}:=1-|k|^\epsilon(1+|k|^{2\epsilon})^{-1/2}.\] 
This choice of $\lambda_{k,\epsilon}$ coincides with that in the example above when $K=|\rd/\rd \theta|^{\epsilon}$. Since $\epsilon \mapsto \lambda_{k,\epsilon}$ is smooth, so is $\epsilon\mapsto T_\epsilon$. The sequence $(\lambda_{k,\epsilon})_{k\in\Z}$ behaves asymptotically as $|k|^{-\epsilon}$ so $(\lambda_{k,\epsilon})_{k\in\Z}\in \ell^{2p}(\N)$ since $\epsilon>1/2p$. 

When $\epsilon\in(1/p,2/p)$ the sequence $(\lambda_{k,\epsilon})_{k\in\Z}$ is $p$-summable. Therefore $(T_\epsilon)_{\epsilon\in(1/p,2/p)}\subseteq \ellL^p(H^2(S^1))$ and $\tau_{T_\epsilon}$ is isomorphic to the Toeplitz extension on the Hardy space for $\epsilon\in (1/p,2/p)$. However, when $\epsilon<1/p$ we have that $(\lambda_{k,\epsilon})_{k\in\Z}\notin \ell^{p}(\N)$. The norm estimate of the differences of the Toeplitz extension on the Hardy space and a deformation by $T_\epsilon$ in Lemma \ref{uniteqlem} implies that for any unitary $U\in \Bo(H^2(S^1))$ 
\[U^*PaPU-(P+T_\epsilon)a(P+T_\epsilon)\notin\ellL^p(H^2(S^1)).\]
Therefore $\tau$ is not strongly equivalent to $\tau_{T_\epsilon}$ for $\epsilon\in(1/2p,1/p)$ and $x_\epsilon\neq x_{\epsilon+1/p}$ for $\epsilon\in (1/2p,1/p)$.

\end{proof}

\newpage

\end{document}